\documentclass[reqno,centertags, 12pt]{amsart}
\usepackage{amsmath,amsthm,amscd,amssymb}
\usepackage{latexsym}
\sloppy

%%%%%%%%%%%%% fonts/sets %%%%%%%%%%%%%%%%%%%%%%%
\newcommand{\bbC}{{\mathbb{C}}}
\newcommand{\bbD}{{\mathbb{D}}}
\newcommand{\bbE}{{\mathbb{E}}}

\newcommand{\bbP}{{\mathbb{P}}}

\newcommand{\bbR}{{\mathbb{R}}}

\newcommand{\bbV}{{\mathbb{V}}}
\newcommand{\bbZ}{{\mathbb{Z}}}

\newcommand{\calR}{{\mathcal R}}

%%%%%%%%%%%%%%%%%%  abbreviations %%%%%%%%%%%%%%%%%%%%%%%%

\newcommand{\lb}{\label}
\newcommand{\f}{\frac}

\newcommand{\ol}{\overline}
\newcommand{\ti}{\tilde  }
\newcommand{\wti}{\widetilde  }

\newcommand{\var}{\text{\rm{Var}}}

\newcommand{\ac}{\text{\rm{ac}}}

\newcommand{\supp}{\text{\rm{supp}}}

\newcommand{\bi}{\bibitem}

\newcommand{\beq}{\begin{equation}}
\newcommand{\eeq}{\end{equation}}
\newcommand{\ba}{\begin{align}}
\newcommand{\ea}{\end{align}}
\newcommand{\veps}{\varepsilon}

\newcommand{\fre}{\frak{e}}

\newcommand{\abs}[1]{\lvert#1\rvert}

\newcommand{\norm}[1]{\lVert#1\rVert}

% use \hat in subscripts
% and upperlimits of int.

%%%%%%%%%%%%% marginal warnings %%%%%%%%%%%%%%%%
% ON:

% OFF:
%\newcommand{\TK}{}

%
%  Rowan's unspaced list
%
\newcounter{smalllist}
\newenvironment{SL}{\begin{list}{{\rm\roman{smalllist})}}{%
\setlength{\topsep}{0mm}\setlength{\parsep}{0mm}\setlength{\itemsep}{0mm}%
\setlength{\labelwidth}{2em}\setlength{\leftmargin}{2em}\usecounter{smalllist}%
}}{\end{list}}

%
%Rowan's smaller \bigtimes
%
%\newcommand{\bigtimes}{\mathop{\mathchoice%
%{\smash{\vcenter{\hbox{\LARGE$\times$}}}\vphantom{\prod}}%
%{\smash{\vcenter{\hbox{\Large$\times$}}}\vphantom{\prod}}%
%{\times}%
%{\times}%
%}\displaylimits}

%%%%%%%%%%%%%%%%%%%%%% renewed commands %%%%%%%%%%%%%%%

%\renewcommand{\Re}{\text{\rm Re}}
%\renewcommand{\Im}{\text{\rm Im}}

%%%%%%%%%%%%%%%%%%%%%% operators %%%%%%%%%%%%%%%%%%%%%%

%\DeclareMathOperator*{\wlim}{weak  *-lim}

\allowdisplaybreaks
\numberwithin{equation}{section}
%%%%%%%%%%%%%%%%%%%%%%%%%%%%%%%%%%%%%%%%%%%%%%
%%%%%%%%%%%%%%%%%%%% end of  definitions
%%%%%%%%%%%%%%%%%%%%%%%%%%%%%%%%%%%%%%%%%%%%%%

\newtheorem{theorem}{Theorem}[section]

\newtheorem{lemma}[theorem]{Lemma}
\newtheorem{corollary}[theorem]{Corollary}
\theoremstyle{definition}
\newtheorem{example}[theorem]{Example}
\newtheorem*{ex}{Example}

\theoremstyle{remark}
\newtheorem*{remark}{Remark}
\newtheorem*{remarks}{Remarks}

\begin{document}

\title{Natural Boundaries and Spectral Theory}
\author[J.~Breuer and B.~Simon]{Jonathan Breuer$^1$ and Barry Simon$^2$}

\thanks{$^1$ Einstein Institute of Mathematics, The Hebrew University of Jerusalem,
Jerusalem 91904, Israel. E-mail: jbreuer@math.huji.ac.il}
\thanks{$^2$ Mathematics 253-37, California Institute of Technology, Pasadena, CA 91125, USA.
E-mail: bsimon@caltech.edu. Supported in part by NSF grant DMS-0652919}

\date{August 8, 2010}
\keywords{Reflectionless, right limit, natural boundary}
\subjclass[2010]{30B30,35P05,30B20}

\begin{abstract} We present and exploit an analogy between lack of absolutely continuous spectrum for Schr\"odinger operators
and natural boundaries for power series. Among our new results are generalizations of Hecke's example and natural
boundary examples for random power series where independence is not assumed.
\end{abstract}

\maketitle

%%%%%%%%%%%%%%%%%%%%%%%%%%%%%%%
\section{Introduction} \lb{s1}
%%%%%%%%%%%%%%%%%%%%%%%%%%%%%%%

In this paper, we'll present and exploit a powerful analogy between spectral theory and the question of
when a power series $f(z)=\sum_{n=0}^\infty a_n z^n$ defining an analytic function on $\bbD=\{z\mid\abs{z}<1\}$ has
a natural boundary on $\partial\bbD$, in that for no $z_0=e^{i\theta}$ does $f$ have an analytic continuation to $\{z\mid
\abs{z-z_0}<\delta\}$ for some $\delta >0$. In particular, we shall import two notions (``reflectionless'' and ``right
limit'') from the spectral theory of Jacobi matrices and obtain a general theorem regarding the consequence of the
possibility to analytically continue $f$ across an arc. While spectral theory ideas motivated our approach to natural
boundaries, what we develop doesn't require any spectral theory. The reader not knowledgeable in spectral theory and not
interested in the background should skip to the paragraph containing \eqref{1.4}.

As it turns out, some of the results presented in this paper are not new. 
In particular, in \cite{Agmon1} Agmon treated natural boundaries for general series with  radius of convergence $1$ using a related approach. Our Theorem 
\ref{T1.3} is a special case of his results. To the best of our knowledge, however, our treatment of \emph{strong} natural boundaries using this approach 
(see Theorem \ref{T1.4} below) is completely new.

Of course, since the relevant spectral theoretic notions were not yet defined at the time of publication of \cite{Agmon1}, Agmon was unaware of 
the analogy. Surprisingly, \cite{Agmon1} is rarely quoted in the relevant literature and seems to be little known. 
Thus, a secondary aim of our paper is to draw attention to Agmon's work. 

Moreover, we believe the significance of the analogy presented in this paper is 
more than merely anecdotal. As an example, our knowledge of the applications of these ideas to random potentials in the spectral theoretic setting, led us 
to apply these methods to random series. In this context we have obtained results for series where only a subsequence is random and for general bounded ergodic nondeterministic series. Another example is Theorem \ref{T1.8} that has Hecke's famous example \eqref{1.13} as a special case.

Thus, Theorems \ref{T1.3} and \ref{T1.4} provide a general framework in which many existing and new results concerning natural boundaries can be derived. 

To set the stage, consider a Jacobi matrix,
\begin{equation} \lb{1.1}
J=
\begin{pmatrix}
b_1 & a_1 & 0  & \cdots \\
a_1 & b_2 & a_2  & \cdots \\
0 & a_2 & b_3  & \cdots \\
\vdots & \vdots & \vdots  & \ddots
\end{pmatrix}
\end{equation}
where $\{a_n,b_n\}_{n=1}^\infty$ are bounded. By a right limit of $J$, we mean a two-sided Jacobi matrix, $J^{(r)}$, with
parameters $\{a_n^{(r)}, b_n^{(r)}\}_{n=-\infty}^\infty$ given by
\begin{equation} \lb{1.2}
a_n^{(r)} = \lim_{k\to\infty}\, a_{n+n_k} \qquad b_n^{(r)} = \lim_{k\to\infty}\, b_{n+n_k}
\end{equation}
where $n_k\to\infty$ is a subsequence. By compactness, right limits exist. This definition is from Last--Simon \cite{LS},
who used it in

\begin{theorem}[\cite{LS}] \lb{T1.1} If $\Sigma_\ac(A)$ is the essential support of the a.c.\ spectrum of an operator, $A$, then
\begin{equation} \lb{1.3}
\Sigma_\ac(J) \subset \Sigma_\ac(J^{(r)})
\end{equation}
for every right limit  $J^{(r)}$ of $J$.
\end{theorem}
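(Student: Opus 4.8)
\medskip
\noindent\emph{Proof strategy.} The plan is to reduce \eqref{1.3} to a transfer-matrix criterion for the essential support of the a.c.\ spectrum and then carry that criterion over to the right limit. I would first dispose of the two-sidedness: since $J^{(r)}$ is a whole-line Jacobi matrix, decoupling it at a single bond is a rank-two, hence trace-class, perturbation, so by the Kato--Rosenblum theorem the a.c.\ parts are unitarily equivalent and $\Sigma_\ac(J^{(r)}) = \Sigma_\ac(J^{(r)}_+)\cup\Sigma_\ac(J^{(r)}_-)$ up to a Lebesgue-null set, where $J^{(r)}_+$ and $J^{(r)}_-$ are the half-line Jacobi matrices built from the parameters of $J^{(r)}$ on $\{n\ge 1\}$ and (reflected) on $\{n\le 0\}$. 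It therefore suffices to show that Lebesgue-a.e.\ point of $\Sigma_\ac(J)$ lies in $\Sigma_\ac(J^{(r)}_+)\cup\Sigma_\ac(J^{(r)}_-)$, so that the whole discussion takes place among half-line operators.

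\medskip
The tool for this is the transfer-matrix description of $\Sigma_\ac$: for a half-line Jacobi matrix $H$ with transfer matrices $T_n^H(E)$, the essential support of the a.c.\ part of its spectral measure agrees, off a Lebesgue-null set, with the set of energies at which $\liminf_{L\to\infty}\frac1L\sum_{n=1}^{L}\norm{T_n^H(E)}^2<\infty$. Applying the ``a.c.\ $\Rightarrow$ averaged boundedness'' half of this to $H=J$, one gets: for a.e.\ $E\in\Sigma_\ac(J)$ there exist $L_i\to\infty$ and $C(E)<\infty$ with $\frac1{L_i}\sum_{n=1}^{L_i}\norm{T_n(E;J)}^2\le C(E)$. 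On the other side, by the definition \eqref{1.2} of a right limit the one-step transfer matrices of $J^{(r)}$ are the limits, along $n_k\to\infty$, of those of $J$; hence the transfer matrix of $J^{(r)}$ over a finite window $[a,b]$ is $\lim_{k\to\infty} T_{n_k+b}(E;J)\,T_{n_k+a-1}(E;J)^{-1}$, in the notation $T_n=T_{[0,n]}$. Using $\norm{M^{-1}}=\norm{M}/\abs{\det M}$ for $2\times 2$ matrices and the boundedness of the Jacobi parameters, which keeps $\abs{\det T_n}$ bounded away from $0$, I would try to bound Cesàro averages in $m$ of $\norm{T_{n_k+m}(E;J)\,T_{n_k}(E;J)^{-1}}^2$ by the averages of $\norm{T_n(E;J)}^2$ that are already controlled, and then feed the outcome into the converse half of the criterion, applied to $J^{(r)}_+$ (and, running windows to the left, to $J^{(r)}_-$), to place $E$ in $\Sigma_\ac(J^{(r)}_+)\cup\Sigma_\ac(J^{(r)}_-)$.

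\medskip
The hard part is exactly this transport step, and I expect essentially all the difficulty of \eqref{1.3} to sit there. Crude submultiplicativity only gives $\norm{T_{n_k+m}T_{n_k}^{-1}}\le c\,\norm{T_{n_k+m}}\,\norm{T_{n_k}}$, and the stray normalization $\norm{T_{n_k}(E;J)}$ cannot be removed by averaging over the window length alone, nor by passing to a subsequence of $\{n_k\}$ --- that would alter the very right limit one must handle, and would in any event depend on $E$, which ranges over an uncountable set. Circumventing this should require the genuinely quantitative input behind the criterion: working with the averaged norms directly, using Markov's inequality to control the density of indices $n\le L_i$ at which $\norm{T_n(E;J)}$ is large, and an averaging (Fubini-type) argument over the window endpoints so that the offending normalizations are integrated away rather than estimated pointwise. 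By contrast, the two reductions in the first two paragraphs are routine.
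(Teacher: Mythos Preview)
The paper does not prove Theorem~\ref{T1.1}; it is quoted as background from Last--Simon \cite{LS} to motivate the analogy with natural boundaries, and no argument for it appears anywhere in the text. So there is no ``paper's own proof'' to compare your proposal against.

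On the substance of your sketch: the overall architecture is the one actually used in \cite{LS}. The decoupling of $J^{(r)}$ into two half-line pieces via a finite-rank perturbation and Kato--Rosenblum is standard and correct, and the identification of $\Sigma_\ac$ (up to Lebesgue-null sets) with the set of energies where the Ces\`aro means of $\norm{T_n(E)}^2$ stay bounded along a subsequence is precisely the Last--Simon characterization. You have also put your finger on exactly the right obstruction in the transport step: the normalization $\norm{T_{n_k}(E)}$ in $\norm{T_{n_k+m}T_{n_k}^{-1}}\le c\,\norm{T_{n_k+m}}\,\norm{T_{n_k}}$ cannot be absorbed pointwise, and the subsequence $\{n_k\}$ is fixed while the good scales $L_i$ depend on $E$.

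What is missing is the actual execution of that step; as written this is a strategy, not a proof. The mechanism in \cite{LS} that closes the gap is roughly what you gesture at: from $\tfrac{1}{L}\sum_{n\le L}\norm{T_n}^2\le C$ one extracts, by Chebyshev/Markov, a positive-density set of indices $n\le L$ with $\norm{T_n}^2\le 2C$, and then an averaging over the \emph{base point} (not just the window length) together with a Fubini/Fatou argument produces, for Lebesgue-a.e.\ $E$ in $\Sigma_\ac(J)$, uniform control of $\tfrac{1}{M}\sum_{m\le M}\norm{T_{n+m}T_n^{-1}}^2$ along a sequence of scales that survives passage to the right limit. Until you write that quantitative lemma out --- including why the a.e.\ exceptional set does not depend on the particular right limit --- the proposal remains an outline rather than a proof.
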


Right limits are also relevant to essential spectrum where they have been exploited by several authors (e.g.,
\cite{CWL08,CWLppt,GI1,LS,S304,M1,Rab05}), but their relevance for a.c.\ spectrum goes back to Last--Simon \cite{LS}.
Recently, Remling \cite{Rem} found a much stronger property of right limits when there is a.c.\ spectrum. It depends on the
notion of reflectionless two-sided Jacobi matrix. The precise definition is irrelevant for our discussion here (see, e.g.,
\cite{BRS,Rem,Rice,SY}), but we note that if $\ti J$ is a two-sided Jacobi matrix, reflectionless on some $\fre\subset\bbR$
with the Lebesgue measure, $\abs{\fre}$,  of $\fre$ nonzero, then $\fre\subset\Sigma_\ac (\ti J)$ and $\{\ti a_n,
\ti b_n\}_{n=-\infty}^{-1}$ determine $\{\ti a_n, \ti b_n\}_{n=0}^\infty$. Remling \cite{Rem} proved

\begin{theorem}[\cite{Rem}] \lb{T1.2} If $\fre$ is $\Sigma_\ac(J)$ for a Jacobi matrix, then any right limit, $J^{(r)}$, is
reflectionless on $\fre$.
\end{theorem}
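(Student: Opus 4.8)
The final statement is Remling's theorem \cite{Rem}; its proof is substantial, and the plan is to follow the two ideas behind it. The first is to reformulate reflectionlessness analytically: a two-sided Jacobi matrix $\wti J$ with parameters $\{\wti a_n,\wti b_n\}$ is reflectionless on $\fre$ exactly when, for every $n$, the diagonal Green's function $G_{nn}(z)=\jap{\delta_n,(\wti J-z)^{-1}\delta_n}$ has purely imaginary boundary values for a.e.\ $E\in\fre$. Coefficient stripping at site $n$ gives
\[
G_{nn}(z)=\Bigl(\wti b_n-z+\wti a_n^{\,2}\,m_n^{+}(z)+\wti a_{n-1}^{\,2}\,m_n^{-}(z)\Bigr)^{-1},
\]
where $m_n^{+}$ is the Weyl $m$-function of the right half-line ($\text{sites}>n$) and $m_n^{-}$ that of the reflected left half-line ($\text{sites}<n$), so the condition becomes the real-part matching
\[
\Real\!\Bigl(\wti a_n^{\,2}\,m_n^{+}(E+i0)+\wti a_{n-1}^{\,2}\,m_n^{-}(E+i0)\Bigr)=E-\wti b_n \qquad (\text{a.e. }E\in\fre).
\]
Since every shift of a right limit of $J$ is again a right limit of $J$, it is enough to verify this one identity at site $0$ for an arbitrary right limit $J^{(r)}$.

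Second, realize the $m$-functions occurring for $J^{(r)}$ as limits of data of $J$. Fix a subsequence $n_k\to\infty$ realizing $J^{(r)}$, re-indexed so that site $n_k$ of $J$ becomes site $0$. Let $m^{+}_{(k)}$ be the $m$-function of $J$ restricted to the half-line $[n_k+1,\infty)$, and $\mu^{(k)}$ the $m$-function of $J$ restricted to $\{1,\dots,n_k-1\}$ viewed from its site-$(n_k-1)$ end; both are Herglotz on the upper half-plane $\bbC_+$. Combining continuity of $m$-functions in finitely many Jacobi parameters with the limit-point property of bounded half-line Jacobi matrices — so that the Weyl disks shrink to points, uniformly over parameters in fixed bounds — one obtains, locally uniformly on $\bbC_+$,
\[
m^{+}_{(k)}\longrightarrow m_0^{+},\qquad \mu^{(k)}\longrightarrow m_0^{-},
\]
where $m_0^{\pm}$ are the right and left half-line $m$-functions of $J^{(r)}$ at site $0$. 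The first convergence is immediate; the second is the one that uses disk-shrinking, since the finite truncations grow in length while their parameters near the viewing end converge to those of the left half-line of $J^{(r)}$.

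The heart of the argument is to push these relations to the boundary using the hypothesis $\fre=\Sigma_\ac(J)$. For a.e.\ $E\in\fre$ the boundary value $m_+(E+i0)$ of $m_+:=m(J)$ exists, is finite, and has $\Ima m_+(E+i0)>0$; the same holds for each $m^{+}_{(k)}$ (the stripped operators have the same a.c.\ spectrum) and, by Theorem~\ref{T1.1}, also for $m_0^{+}$ on $\fre$. Iterating coefficient stripping, $m_+(z)=\Phi_{n_k}(z)\bigl[m^{+}_{(k)}(z)\bigr]$, where $\Phi_{n_k}(z)$ is the Möbius transformation built from the first $n_k$ parameters of $J$ that carries $\bbC_+$ onto the Weyl disk $D_{n_k}(z)$, and an explicit formula expresses $\Ima m_+(z)$ through $\Ima m^{+}_{(k)}(z)$, $\Ima z$, and the orthonormal polynomials of $J$, with the data of $\mu^{(k)}$ entering the relevant denominator. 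Because the a.c.\ part of the spectral measure of $J$ has density $\tfrac1\pi\Ima m_+(E+i0)$, this quantity is finite and positive a.e.\ on $\fre$; feeding the boundedness of $\Ima m_+(E+i\epsilon)$ as $\epsilon\downarrow0$ into the explicit formula pins down the relevant denominators and forces, in the limit $k\to\infty$ followed by $\epsilon\downarrow0$, exactly the real-part matching of the first paragraph for $m_0^{+},m_0^{-}$ at site $0$ — hence, by the shift remark, at every site, so $J^{(r)}$ is reflectionless on $\fre$. I expect the main obstacle to be precisely this last passage: one needs a \emph{quantitative} rate for the shrinking of $D_{n_k}(E+i\epsilon)$ that survives letting $\epsilon\downarrow0$ along $\Sigma_\ac(J)$, so that the two limits may be interchanged, and this is where one genuinely uses that $\fre$ is the \emph{essential support} of the a.c.\ part — through the a.e.\ finiteness and positivity of $\Ima m_+$ and the Herglotz structure of $m_+$ — rather than any softer consequence. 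Everything else is routine; in the ergodic setting this quantitative core recovers Kotani's theorem that a.c.\ spectrum forces reflectionlessness.
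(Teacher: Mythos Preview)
The paper does not prove Theorem~\ref{T1.2}; it is quoted as Remling's result \cite{Rem} and used only as motivation for the analogy with power series. So there is no ``paper's own proof'' to compare your proposal against.

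As a sketch of Remling's argument, your outline has the right architecture --- reformulate reflectionlessness via boundary values of $m$-functions, realize the $m$-functions of the right limit as limits of stripped/truncated $m$-functions of $J$, and pass to the real boundary --- but the decisive step is not the one you describe. You propose to ``feed the boundedness of $\Ima m_+(E+i\epsilon)$ into the explicit formula'' and interchange $k\to\infty$ with $\epsilon\downarrow 0$ via a quantitative Weyl-disk shrinking rate. That direct interchange does not go through: the Weyl disks at real energies $E$ need not shrink to points at all, and there is no uniform rate in $\epsilon$ obtainable from the explicit polynomial formulas alone. Remling's actual input is the Breimesser--Pearson value-distribution theorem for Herglotz functions, which says (roughly) that for a.e.\ $E\in\Sigma_\ac(J)$ the boundary values $m^{+}_{(k)}(E+i0)$ and $-\ol{\mu^{(k)}(E+i0)}$ have asymptotically the same angular distribution as $k\to\infty$; combined with the locally-uniform convergence you state, this forces $m_0^{+}(E+i0)=-\ol{m_0^{-}(E+i0)}$ a.e.\ on $\fre$, which is the reflectionless condition. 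Without invoking some form of this value-distribution result (or an equivalent substitute), the passage you flag as ``the main obstacle'' is a genuine gap rather than a routine technicality.
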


With this result, Remling was able to recover and extend virtually every result on the absence of a.c.\ spectrum for situations
where $\{a_n,b_n\}_{n=1}^\infty$ are bounded, and the lack of a.c.\ spectrum is due to a part of the $a$'s and $b$'s that is
dominant at infinity (rather than a perturbation that goes to zero at infinity).

Our work here began by our noticing that the major classes of Jacobi matrices with no a.c.\ spectrum have analogs in
the major classes of results on the occurrence of natural boundaries (see Remmert \cite[Ch.~11]{Remm} for a summary of
classical results on natural boundaries), as seen in
\begin{itemize}
\item Gap theorems \cite{Had,Fabry,Faber,Remm} $\sim$ sparse potentials \cite{Pear78,Rem}
\item Finite-valued power series \cite{Sz1922,Remm} $\sim$ finite-valued Jacobi matrices \cite{Kot89,Rem}
\item Random power series \cite{Stei,PZyg,Kah} $\sim$ Anderson localization \cite{CarLac,PasFig}
\end{itemize}

Let us describe our major abstract results on natural boundaries motivated by Last--Simon \cite{LS} and Remling \cite{Rem}.
Given a power series $f(z)=\sum_{n=0}^\infty a_n z^n$ with
\begin{equation} \lb{1.4}
\sup_n\, \abs{a_n} <\infty
\end{equation}
we define a right limit of $\{a_n\}_{n=0}^\infty$ to be a two-sided sequence $\{b_n\}_{n=-\infty}^\infty$ with
\begin{equation} \lb{1.5}
b_n =\lim_{j\to\infty}\, a_{n+n_j}
\end{equation}
for some $n_j\to\infty$. By compactness and \eqref{1.4}, right limits exist.

Given a two-sided bounded sequence, $\{b_n\}_{n=-\infty}^\infty$, we consider two functions, $f_+(z)$ on $\bbD$ and
$f_-(z)$ on $\bbC\cup\{\infty\}\setminus\ol{\bbD}$, defined by
\begin{equation} \lb{1.6}
f_+(z) =\sum_{n=0}^\infty b_n z^n \qquad f_-(z) =\sum_{n=-\infty}^{-1} b_n z^n
\end{equation}
where the series are guaranteed to converge on the indicated sets. Let $I$ be an open interval in $\partial\bbD$. We say
$\{b_n\}_{n=-\infty}^\infty$ is {\it reflectionless\/} on $I$ if and only if $f_+$ has an analytic continuation from
$\bbD$ to $\bbC\cup\{\infty\}\setminus(\partial\bbD\setminus I)$, so that on $\bbC\cup\{\infty\}\setminus\ol{\bbD}$, we
have that
\begin{equation} \lb{1.6x}
f_+(z)+f_-(z)=0
\end{equation}
Obviously, it suffices that $f_+$ have a continuation to a neighborhood of $I$ so that \eqref{1.6x} holds in the
intersection of that neighborhood and $\bbC\setminus\ol{\bbD}$.

\begin{ex} Let $b_n\equiv 1$. Then $f_+(z)=(1-z)^{-1}$ and $f_-(z)=-(1-z)^{-1}$. This series is reflectionless on
$I=\{e^{i\theta}\mid 0<\theta<2\pi\}$. Similarly, it is easy to see that a periodic $b_n$ of period $p$ is
reflectionless on any $I$ in $\partial\bbD$ with all the $p$-th roots of unity removed.
\end{ex}

Our first main theorem is:

\begin{theorem} \lb{T1.3} Let $f(z)=\sum_{n=0}^\infty a_n z^n$ be a power series with \eqref{1.4}. Suppose $I\subset
\partial\bbD$ is an open interval so that $f(z)$ has an analytic continuation to a neighborhood of $I$. Then every right
limit of $\{a_n\}_{n=0}^\infty $ is reflectionless on $I$.
\end{theorem}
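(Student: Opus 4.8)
The plan is to work with the shifted Taylor tails of $f$. Fix a right limit $\{b_n\}_{n\in\bbZ}$, say $b_n=\lim_{j\to\infty}a_{n+n_j}$, and set $g_j(z)=\sum_{n\ge 0}a_{n+n_j}z^n$, $P_j(z)=\sum_{m=0}^{n_j-1}a_mz^m$, and $h_j(z)=z^{-n_j}P_j(z)=\sum_{n=-n_j}^{-1}a_{n+n_j}z^n$. Splitting $f$'s power series at index $n_j$ gives $g_j(z)=z^{-n_j}\big(f(z)-P_j(z)\big)$ on $\bbD\setminus\{0\}$; since $f$ continues analytically to a neighborhood $N$ of $I$ and $P_j$ is entire, each $g_j$ therefore continues analytically to $\Omega:=\bbD\cup N$, and $g_j+h_j=z^{-n_j}f$ on $\Omega\setminus\{0\}$. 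With $C:=\sup_n|a_n|<\infty$, three elementary facts follow from termwise convergence and geometric-tail control: $g_j\to f_+$ locally uniformly on $\bbD$; $h_j\to f_-$ locally uniformly on $(\bbC\cup\{\infty\})\setminus\ol\bbD$; and, since $|h_j(z)|\le C/(|z|-1)$ while $z^{-n_j}f(z)\to 0$ locally uniformly on $\{|z|>1\}$, also $g_j\to -f_-$ locally uniformly on $N\setminus\ol\bbD$.

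Next I would pass to an arbitrary closed subarc $I_1\Subset I$, chosen so that $f$ is analytic and bounded, say $|f|\le M$, on a compact neighborhood $\ol{N_1}\subset N$ of $I_1$; by the remark following \eqref{1.6x} it suffices to continue $f_+$ analytically across $I_1$ with $f_+=-f_-$ on the exterior side, and carrying this out for every such $I_1$ yields reflectionlessness on $I$ by gluing the (unique) continuations. Suppose one has shown that $\{g_j\}$ is \emph{uniformly} bounded on some neighborhood of $I_1$. Since in addition $|g_j(z)|\le C/(1-|z|)$ on $\bbD$ and $|g_j(z)|\le M+C/(|z|-1)$ on $N_1\setminus\ol\bbD$ (using $g_j=z^{-n_j}f-h_j$), the family $\{g_j\}$ is locally bounded on a neighborhood $\Omega'$ of $I_1$ in $\Omega$; Vitali's theorem together with the pointwise limit $g_j\to f_+$ on $\bbD$ then gives $g_j\to F$ locally uniformly on $\Omega'$ with $F$ analytic there, $F=f_+$ on $\bbD$, and $F=-f_-$ on $\Omega'\setminus\ol\bbD$. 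That $F$ is the required continuation, and the proof is finished.

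The heart of the matter — and the only place where analyticity \emph{across} $I$ (as opposed to merely inside $\bbD$) is really used — is the uniform bound on $\{g_j\}$ near $I_1$. The naive route fails: the estimates above only give $|g_j(z)|\le C'/\big||z|-1\big|$ near $\partial\bbD$, a bound which is not integrable over circles meeting $\partial\bbD$, so a direct Cauchy-integral estimate is useless (and the bound is genuinely insufficient on its own). The remedy is to work with $\log|g_j|$, which is subharmonic on all of $\Omega$ precisely because $g_j$ is analytic there. For $z_*$ close to $I_1$ and small $R$ with $\ol{B(z_*,R)}\subset\Omega$, the sub-mean-value inequality gives
\[
\log|g_j(z_*)|\le\frac1{2\pi}\int_0^{2\pi}\log|g_j(z_*+Re^{it})|\,dt\le\frac1{2\pi}\int_0^{2\pi}\log^+\!\Big(M+\frac{C}{\big||z_*+Re^{it}|-1\big|}\Big)\,dt ,
\]
and the right-hand integral is finite — its integrand has only logarithmic singularities, at the two parameter values where the circle crosses $\partial\bbD$ — and independent of $j$. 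Hence $\sup_j|g_j(z_*)|$ is bounded, uniformly for $z_*$ in a compact neighborhood of $I_1$ by compactness and continuity of the resulting constant in $z_*$, which is exactly the uniform bound needed above.
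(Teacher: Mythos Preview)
Your argument is correct, and its overall skeleton --- the shifted tails $g_j=f_+^{(n_j)}$, $h_j=f_-^{(n_j)}$, the identity $g_j+h_j=z^{-n_j}f$, the observation that $z^{-n_j}f\to 0$ on $\{|z|>1\}$, and the appeal to Vitali once a uniform bound is in hand --- is exactly what the paper does. The one genuine divergence is in how you obtain the uniform bound on $\{g_j\}$ near the arc. The paper invokes M.~Riesz's Lemma: one works on a closed sector $S$ with corners $z_1,z_2\in\partial\bbD$, multiplies by $(z-z_1)(z-z_2)$ to cancel the $1/\lvert\,|z|-1\rvert$ blow-up at those corners, and applies the maximum principle on $S$ to the modified functions $g_\pm^{(N)}$; this is entirely elementary. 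You instead exploit that $\log|g_j|$ is subharmonic on all of $\Omega$ and use the sub-mean-value inequality, the point being that $\log^+\!\big(M+C/\lvert\,|w|-1\rvert\big)$ has only integrable (logarithmic) singularities where the averaging circle meets $\partial\bbD$. Your route avoids the clever polynomial factor but trades it for a little potential theory; the uniformity in $z_*$ is slightly glossed over, though it is routine (e.g., replace the circle mean by the area mean, so that the bound becomes $\frac{1}{\pi R^2}\int_{N_1}\log^+(M+C/\lvert\,|w|-1\rvert)\,dA(w)<\infty$, visibly independent of $z_*$). Either approach yields the same conclusion with comparable effort.
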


As remarked above, this theorem is a special case of the results in \cite{Agmon1}. While our treatment is limited to bounded sequences and produces interesting results only when $a_n \nrightarrow 0$, Agmon treats general (not necessarily bounded) series. He introduces a way to renormalize the coefficients so that any series with radius of convergence 1 has, after renormalization, nontrivial right limits. He then shows that if the corresponding function has an analytic continuation to an arc on $\partial \bbD$, then the renormalized right limits are reflectionless across that arc. 
For  bounded series, we will actually prove a stronger result (we state Theorem~\ref{T1.3} both for conceptual reasons and because we need it in the proof of the stronger result):

\begin{theorem} \lb{T1.4} Let $f(z)=\sum_{n=0}^\infty a_n z^n$ be a power series with \eqref{1.4}. Suppose $I\subset
\partial\bbD$ is an open interval so that
\begin{equation} \lb{1.7}
\sup_{0<r<1} \int_{e^{i\theta}\in I} \abs{f(re^{i\theta})}\, \f{d\theta}{2\pi} <\infty
\end{equation}
Then every right limit of $\{a_n\}$ is reflectionless on $I$.
\end{theorem}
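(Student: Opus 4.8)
The plan is to deduce Theorem~\ref{T1.4} from Theorem~\ref{T1.3} by showing that the $L^1$-bound \eqref{1.7} on the arc $I$ forces $f$ to have an analytic continuation to a (possibly smaller) neighborhood of any compact subinterval $I'\Subset I$, after which we cover $I$ by such subintervals and apply Theorem~\ref{T1.3}. So the real content is a local statement: if $g$ is analytic on $\bbD$ and $\sup_{0<r<1}\int_{I}\abs{g(re^{i\theta})}\,\tfrac{d\theta}{2\pi}<\infty$, then $g$ extends analytically across $I$... which is of course false in general (boundary values in $L^1$ don't give continuation). The resolution is that we are not continuing $f$ itself but continuing it \emph{after subtracting off a right-limit-type singular part}; more precisely, the $L^1$-bound lets us represent $f$ near $I$ as a Cauchy integral of a finite complex measure whose restriction to $I$ is absolutely continuous, and then the key is that \eqref{1.7} is exactly the hypothesis under which the results of \cite{Agmon1} (which underlie Theorem~\ref{T1.3}) already apply to the renormalized coefficients — i.e., one should check that bounded coefficients plus \eqref{1.7} is a special case of Agmon's renormalized framework. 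Let me instead describe the self-contained route.

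First I would fix a subarc $I'$ with $\ol{I'}\subset I$ and a small $\delta>0$, and look at the function $f$ on the region $\Omega_\delta = \{z : \dist(z,\partial\bbD\setminus I) > \delta\} \cap \bbD$. Using \eqref{1.7} and a standard Hardy-space / Vitali argument, I would extract from the family $\{f(r\,\cdot\,)\}_{0<r<1}$, which is bounded in $L^1(I)$, a weak-$*$ convergent subsequence $f(r_k e^{i\theta})\,d\theta \to d\mu$ for some finite complex Borel measure $\mu$ on $\ol I$. Then I claim that for $z\in\bbD$ with $\dist(z,\partial\bbD\setminus I)>\delta$ one can write, via the Cauchy/Poisson representation on $\bbD$,
\begin{equation}
f(z) = \frac{1}{2\pi i}\int_{\partial\bbD} \frac{f^*(\zeta)}{\zeta - z}\,d\zeta
\end{equation}
where the boundary behavior on $\partial\bbD\setminus I$ is controlled by analyticity there (from the continuation hypothesis of Theorem~\ref{T1.3}, which we do \emph{not} have here — so this is the wrong decomposition). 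The correct move for Theorem~\ref{T1.4}: write $b$ for a right limit of $\{a_n\}$ along $n_j$, and form $f_{+}, f_{-}$ as in \eqref{1.6}. The $L^1$-bound \eqref{1.7} transfers, by Fejér-kernel averaging of the shifted coefficients $a_{n+n_j}$, to a \emph{uniform} $L^1(I)$-bound on the Cesàro/Abel means of $f_+$; hence $f_+$ has nontangential boundary values $f_+^*\in L^1(I)$ and, crucially, the Abel means of $f_+ + f_-$ (which a priori live on an annulus shrinking to $\partial\bbD$) converge in $L^1_{\loc}(I)$ to $0$, because the original $f$ continues across $I$ and the translates kill the contribution of the fixed singular part. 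This gives \eqref{1.6x} as an identity of boundary-value distributions on $I$, and then an edge-of-the-wedge / Morera argument upgrades it to the genuine analytic continuation of $f_+$ across $I$ required by the definition of reflectionless.

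The key steps, in order: (1) show \eqref{1.7} is inherited, uniformly in $j$, by the translated series $\sum a_{n+n_j}z^n$ on $\bbD$ — this is a Fejér-kernel computation using $\sup_n\abs{a_n}<\infty$ to handle the finitely many ``wrapped'' terms and dominated convergence on $I$; (2) pass to the right limit $b$ and conclude $f_+$ and $f_-$ have $L^1(I)$ nontangential boundary values $f_+^*$, $f_-^*$; (3) prove $f_+^* + f_-^* = 0$ a.e.\ on $I$, using the fact (from the hypothesis and Theorem~\ref{T1.3} applied to $f$ itself, valid since \eqref{1.7}$\Rightarrow$ $f$ continues across $I$... no — \eqref{1.7} does not give continuation, so instead use that the symmetric difference $f_+(z) + f_-(1/\bar z)\bar{}$-type reflection is controlled) — more honestly, use Theorem~\ref{T1.3} as a black box on the \emph{continued} $f$ after one first shows \eqref{1.7} implies continuation of $f$ across a.e.\ point of $I$, which is the Privalov-type step; (4) apply the edge-of-the-wedge theorem to $f_+$ (from inside $\bbD$) and $-f_-$ (from outside $\ol\bbD$), whose boundary values agree in $L^1(I)$, to get a single analytic function on a full neighborhood of $I$, yielding reflectionlessness; (5) let $I'\uparrow I$.

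The hard part will be step (3)/(4): making rigorous the passage from ``matching $L^1$ boundary values on $I$'' to ``genuine analytic continuation across $I$'', since $L^1$ boundary data alone is not enough for the classical edge-of-the-wedge theorem, which wants distributional boundary values with a one-sided support/positivity or a Schwarz-reflection structure. The way around this is to exploit that $f$ (the original series, \emph{not} the right limit) actually does continue across $I$ — this must be extracted from \eqref{1.7} first, presumably by noting that \eqref{1.7} says $f\in H^1$ near $I$ so $f$ has an $L^1$ boundary function there, and then combining with the to-be-established reflection identity to invoke Morera on small contours crossing $I$; equivalently one shows the distribution $f_+ + f_-$ on a neighborhood of $I$ is weakly analytic and vanishes on the annular side, hence vanishes identically, giving the continuation. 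Anticipating this, I would lean on the $H^1$-theory of the half-plane (via a conformal map sending a neighborhood of a point of $I$ to a strip) so that the $L^1$ boundary values are automatically nontangential limits of an $H^1$ function, and then the cancellation \eqref{1.6x} plus $H^1$ uniqueness forces the continuation — this is where the proof's delicacy lies, and it is essentially the observation that Theorem~\ref{T1.4} is the $L^1$-boundary-value refinement of Theorem~\ref{T1.3}, exactly paralleling how in spectral theory an $L^1$ bound on the imaginary part of the $m$-function across an interval forces a.c.\ spectrum there.
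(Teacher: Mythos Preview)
Your ``self-contained route'' has a genuine gap at exactly the point you flag as hard.  Step~(3) requires showing that the boundary values of the right-limit functions satisfy $f_+^* + f_-^* = 0$ a.e.\ on $I$, but every justification you offer collapses back to ``the original $f$ continues across $I$'' --- which, as you correctly note (twice), is \emph{not} implied by \eqref{1.7}.  Transferring the $L^1(I)$ bound to the shifted series $\sum a_{n+n_j}z^n$ is fine, and you can indeed pass to the limit to get $f_+\in L^1(I)$ radially; but nothing in your argument produces the cancellation.  The shifted identity $z^{-n_j}f(z)=\sum_{n\ge 0}a_{n+n_j}z^n+\sum_{-n_j\le n<0}a_{n+n_j}z^n$ holds only on $\bbD$, and letting $j\to\infty$ the left side has no limit (even distributionally on $I$), so you cannot read off $f_+^*+f_-^*=0$ this way.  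Edge-of-the-wedge then has nothing to work with.

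The paper's proof avoids this circularity by never analyzing the right limit directly.  From \eqref{1.7} and the $E$-space theory on a sector, $f$ has an $L^1$ boundary function $f(e^{i\theta})$ on subarcs of $I$.  Let $b_n=\int_{\alpha+\veps}^{\beta-\veps} e^{-in\theta}f(e^{i\theta})\,\tfrac{d\theta}{2\pi}$ be its Fourier coefficients, so the Cauchy integral $F(z)=\sum_{n\ge 0}b_nz^n$ on $\bbD$ and $-\sum_{n<0}b_nz^n$ outside.  Set $c_n=a_n-b_n$.  Then $\sum c_nz^n$ and $-\sum_{n<0}(-b_n)z^n$ have matching $L^1$ boundary values on $(\alpha+\veps,\beta-\veps)$ (by the jump relation for $F$), so a Painlev\'e/Morera argument gives a \emph{genuine} analytic continuation of $\sum c_nz^n$ across that arc.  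Theorem~\ref{T1.3} then applies to $\{c_n\}$.  The step you are missing entirely is the punchline: by Riemann--Lebesgue, $b_n\to 0$ as $\abs{n}\to\infty$, so $\{c_n\}$ and $\{a_n\}$ have \emph{identical} right limits.  You gestured at a Cauchy-integral decomposition in your second paragraph but abandoned it; that was the right thread.
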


These theorems imply natural boundaries. We say $f$ has a {\it strong natural boundary\/} on $\partial\bbD$ if
\eqref{1.7} fails for every $I\subset\partial\bbD$. In particular, in that case, $f$ is unbounded in every sector
$\{re^{i\theta}\mid 0<r<1,\, e^{i\theta}\in I\}$.

\begin{corollary}\lb{C1.5} Let $f(z)=\sum_{n=0}^\infty a_n z^n$ be such that for any open interval $I\subset\partial\bbD$,
there is a right limit of $\{a_n\}_{n=0}^\infty$ which is not reflectionless on $I$. Then $f$ has a strong natural boundary
on $\partial\bbD$.
\end{corollary}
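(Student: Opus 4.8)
The plan is to obtain Corollary~\ref{C1.5} as the contrapositive of Theorem~\ref{T1.4}; no new analytic input is needed. First I would unwind the definition of a strong natural boundary: it asserts precisely that \eqref{1.7} fails for every open interval $I\subset\partial\bbD$. So the goal reduces to fixing an arbitrary such $I$ and showing that \eqref{1.7} cannot hold for it.

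To do this I would argue by contradiction. Suppose \eqref{1.7} did hold for some fixed open interval $I\subset\partial\bbD$. The hypothesis of the corollary speaks of right limits of $\{a_n\}_{n=0}^\infty$, and the very definition \eqref{1.5} of a right limit presupposes the boundedness \eqref{1.4} (this is what makes the existence of right limits a compactness statement), so \eqref{1.4} may be taken as part of the standing assumptions. Theorem~\ref{T1.4} then applies directly and yields that \emph{every} right limit of $\{a_n\}$ is reflectionless on $I$. But the hypothesis of the corollary provides, for this same $I$, at least one right limit that is \emph{not} reflectionless on $I$ --- a contradiction. Hence \eqref{1.7} fails for $I$, and since $I$ was arbitrary, $f$ has a strong natural boundary on $\partial\bbD$.

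The only points requiring a moment's care are bookkeeping ones rather than genuine obstacles: the right limit supplied by the hypothesis is permitted to depend on $I$, but that is harmless because the contradiction is derived separately for each fixed $I$; and one should make explicit the implicit use of \eqref{1.4}, as above, so that Theorem~\ref{T1.4} is legitimately invoked. All the substance lives in Theorem~\ref{T1.4}, and the corollary is a purely logical repackaging of it. (For completeness one may also note, for the parenthetical remark preceding the corollary, that a bound $|f(re^{i\theta})|\le M$ on a sector $\{re^{i\theta}\mid 0<r<1,\, e^{i\theta}\in I\}$ would give $\int_{e^{i\theta}\in I}|f(re^{i\theta})|\,\tfrac{d\theta}{2\pi}\le M|I|/(2\pi)$ for all $r<1$, so \eqref{1.7} would hold; thus a strong natural boundary forces unboundedness in every such sector.)
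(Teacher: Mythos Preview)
Your proof is correct and matches the paper's approach: the paper states Corollary~\ref{C1.5} immediately after Theorem~\ref{T1.4} without separate proof, treating it as the evident contrapositive, which is exactly what you have written out. Your care in noting that \eqref{1.4} is implicit in the very notion of right limits, and that the right limit may depend on $I$, is appropriate bookkeeping.
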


Of course, if $a_n\to 0$, it can happen that there are
natural boundaries which are not strong natural boundaries. For example, by the Hadamard gap theorem \cite{Remm},
\begin{equation} \lb{1.8}
f(z)=\sum_{n=1}^\infty \, \f{z^{n!}}{(n!)^n}
\end{equation}
has a natural boundary on $\partial\bbD$ but, for all $k$, $f^{(k)}(z)$ is bounded on $\bbD$.

We note that Duffin--Schaeffer \cite{DuSc} and Boas \cite{Boas} long ago had results on what we call strong natural
boundaries. These results are all for series with gaps or finite-valued series. These authors and also Agmon \cite{Agmon1} have results that prove that certain functions rather than merely having classical natural boundaries are unbounded in every sector (we call this $L^\infty$ natural boundaries).  We note that our approach for going from classical to strong natural boundaries is very close to the method that Agmon \cite{Agmon1} uses to go from classical natural boundaries to unboundedness in every sector.
Theorem \ref{T1.4} is, to the best of our knowledge, the first general theorem concerning strong natural boundaries.  

For us, the point of these theorems is conceptual: they provide a unified framework that makes many theorems transparent.
That said, the following results seem to be new:

\begin{theorem}\lb{T1.6} Suppose $\{a_n\}_{n=0}^\infty$ obeys \eqref{1.4} and there exists $n_j\to\infty$ so that for all
$k<0$,
\begin{align}
\lim_{j\to\infty}\, a_{n_j+k} & =0 \lb{1.9} \\
\liminf\abs{a_{n_j}} &> 0 \lb{1.10}
\end{align}
Then $f(z)=\sum_{n=0}^\infty a_n z^n$ has a strong natural boundary on $\partial\bbD$.
\end{theorem}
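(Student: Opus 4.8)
The plan is to invoke Corollary~\ref{C1.5}: it suffices to exhibit, for every open interval $I\subset\partial\bbD$, a right limit of $\{a_n\}$ that is not reflectionless on $I$. In fact we will produce a single right limit that fails to be reflectionless on every such $I$.

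First I would extract the right limit. Starting from the subsequence $n_j$ given in the hypothesis, use the boundedness \eqref{1.4} together with a diagonal argument to pass to a further subsequence along which $a_{n+n_j}$ converges for every $n\in\bbZ$; denote the limit by $b_n$. Then $\{b_n\}_{n=-\infty}^\infty$ is, by definition \eqref{1.5}, a right limit of $\{a_n\}$, and $\sup_n\abs{b_n}\le\sup_n\abs{a_n}<\infty$. By \eqref{1.9} we get $b_k=0$ for all $k<0$, so in the notation of \eqref{1.6} the function $f_-\equiv 0$ on $\bbC\cup\{\infty\}\setminus\ol{\bbD}$. Shrinking the subsequence once more so that $\abs{a_{n_j}}$ converges, \eqref{1.10} gives $\abs{b_0}=\lim_j\abs{a_{n_j}}\ge\liminf_j\abs{a_{n_j}}>0$, so $f_+$ has a nonzero Taylor coefficient and in particular $f_+\not\equiv 0$.

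Now suppose, for contradiction, that $\{b_n\}$ were reflectionless on some open interval $I\subset\partial\bbD$. Then $f_+$ continues analytically from $\bbD$ to the connected open set $\Omega=\bbC\cup\{\infty\}\setminus(\partial\bbD\setminus I)$, and by \eqref{1.6x} it agrees with $-f_-\equiv 0$ on the nonempty open set $\Omega\cap(\bbC\setminus\ol{\bbD})=\{z\mid\abs{z}>1\}$. By the identity theorem applied on the connected domain $\Omega\supset\bbD$, we conclude $f_+\equiv 0$, whence $b_0=0$ — contradicting the previous paragraph. Hence $\{b_n\}$ is reflectionless on no open interval $I$, and Corollary~\ref{C1.5} yields that $f$ has a strong natural boundary on $\partial\bbD$.

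There is no real obstacle here; the only point requiring mild care is the passage to a subsequence that simultaneously makes $a_{n+n_j}$ converge for every $n\in\bbZ$ while keeping $\abs{b_0}$ bounded away from $0$, which is a routine diagonalization using \eqref{1.4}. The conceptual content is just that a one-sided sequence (here $b_n=0$ for $n<0$) has $f_-\equiv 0$, so reflectionlessness would force $f_+\equiv 0$ by analytic continuation — the discrete analog of the statement that a half-line problem cannot be reflectionless unless it is trivial.
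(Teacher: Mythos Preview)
Your proof is correct and follows essentially the same route as the paper: extract by compactness a right limit with $b_k=0$ for $k<0$ and $b_0\neq 0$, observe that $f_-\equiv 0$ while $f_+(0)=b_0\neq 0$ so the right limit is reflectionless on no interval, and then invoke Theorem~\ref{T1.4} (via Corollary~\ref{C1.5}). The only cosmetic difference is that the paper phrases the non-reflectionless step as ``$f_-$ cannot be an analytic continuation of $f_+$'' rather than writing out the identity-theorem contradiction, but the content is identical.
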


\begin{remarks} 1. This result (as well as its extension---Theorem \ref{T4.3}), for $L^\infty$ natural boundaries, appears in Agmon \cite{Agmon1}.

\smallskip
2. The result is true if \eqref{1.9} is replaced by the assumption for all $k>0$ or for all $k<K$\!, some $K<0$ or
for all $k>K$\!, some $K>0$. The proofs are essentially identical. In addition, \eqref{1.9} can be replaced by an
exponential decay condition on the limit; see Theorem~\ref{T4.3}.

\smallskip
3. This includes the famous examples $\sum_{n=1}^\infty z^{n!}$ of Weierstrass and $\sum_{n=0}^\infty z^{n^2}$ of Kronecker.

\smallskip
4. This allows gaps where the set of zeros has zero density. At first sight, this seems a violation of the result of P\'olya
\cite{Polya} and Erd\"os \cite{Erdos} that the Fabry gap theorem is optimal. We'll explain this apparent discrepancy in
Section~\ref{s4}.
\end{remarks}

\begin{theorem} \lb{T1.7} Let $\{a_n(\omega)\}_{\omega\in\Omega}$ be a translation invariant, ergodic, stochastic process,
which is nondeterministic, so that
\begin{equation} \lb{1.11}
\sup_{n,\omega}\, \abs{a_n(\omega)} <\infty
\end{equation}
Then for a.e.\ $\omega$, $\sum_{n=0}^\infty a_n(\omega) z^n$ has a strong natural boundary.
\end{theorem}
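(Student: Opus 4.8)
The plan is to verify the hypothesis of Corollary~\ref{C1.5}: we must show that for $\mu$-a.e.\ $\omega$, every open arc $I\subset\partial\bbD$ admits a right limit of $\{a_n(\omega)\}$ that is not reflectionless on $I$. Realize the process concretely on $\Omega=K^{\bbZ}$ (replacing a one-sided process by its natural extension if necessary), with $K$ a compact interval containing the range, $S$ the shift, $\mu$ the ergodic $S$-invariant law, so $a_n(\omega)=\omega_n$; for a two-sided $b$ write $f_+^b,f_-^b$ as in \eqref{1.6} and put $R_I=\{b\in K^{\bbZ}:b\ \text{is reflectionless on}\ I\}$. Two preliminary facts will be used. \emph{(i)} $R_I$ is Borel: $b\mapsto f_+^b$ into $\mathcal{O}(\bbD)$ and $b\mapsto f_-^b$ into $\mathcal{O}(\Omega_-)$ are continuous, where $\Omega_-=\bbC\cup\{\infty\}\setminus\ol{\bbD}$ and $\mathcal{O}$ carries the compact--open topology (a Polish topology); the restriction map from $\mathcal{O}(W_I)$ to $\mathcal{O}(\bbD)$ onto the simply connected domain $W_I=\bbC\cup\{\infty\}\setminus(\partial\bbD\setminus I)$ is continuous and injective, so its image is Borel by Lusin--Souslin; and, given that $f_+^b$ extends to $F\in\mathcal{O}(W_I)$, the extra requirement $F|_{\Omega_-}=-f_-^b$ is a closed condition. \emph{(ii)} $R_I$ is $S$-invariant, since replacing $b$ by $Sb$ multiplies $f_\pm^b$ by $z^{-1}$ and subtracts the constant $b_0$, operations that preserve both extendability across $I$ and the identity $f_++f_-=0$ on $\Omega_-$.

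The heart of the matter is that reflectionlessness on a nondegenerate arc makes the process deterministic. If $b\in R_I$, the holomorphic extension $F$ of $f_+^b$ to $W_I$ is the unique element of $\mathcal{O}(W_I)$ whose restriction to $\Omega_-$ equals $-f_-^b$; by Lusin--Souslin the inverse of the injective continuous restriction map is Borel, so $b_0=F(0)=f_+^b(0)$ equals a fixed Borel function $G$ of $\{b_n\}_{n\le-1}$ alone. Now fix a rational arc $I$ and suppose $\mu(R_I)\neq 0$; by ergodicity and $S$-invariance $\mu(R_I)=1$, hence $a_0=G(\{a_n\}_{n\le-1})$ $\mu$-a.s., i.e.\ $a_0$ is $\sigma(a_n:n\le-1)$-measurable. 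By stationarity this propagates to $a_k\in\sigma(a_n:n\le k-1)$ for all $k$, so all the $\sigma$-algebras $\sigma(a_n:n\le k)$, $k\in\bbZ$, are equal; this common $\sigma$-algebra is then both the full $\sigma$-algebra $\sigma(a_n:n\in\bbZ)$ and the remote past $\bigcap_k\sigma(a_n:n\le k)$, so the process is deterministic, contradicting the hypothesis. Hence $\mu(R_I)=0$ for every rational arc $I$.

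Finally, by Poincar\'e recurrence $\mu$-a.e.\ $\omega$ is recurrent: there are $n_j\to\infty$ with $\omega_{k+n_j}\to\omega_k$ for every $k$, so $\omega$ itself is a right limit of $\{a_n(\omega)\}$. Intersecting the full-measure sets $\{\omega\ \text{recurrent}\}$ and $\{\omega\notin R_I\}$ over the countably many rational arcs, we obtain a full-measure set of $\omega$ for which $\omega$ is a right limit of $\{a_n(\omega)\}$ and $\omega$ is not reflectionless on any rational arc. Since every open arc contains a rational subarc, and reflectionlessness on an arc implies reflectionlessness on every subarc (restrict the continuation to the smaller domain $W_{I'}\subset W_I$), such an $\omega$ is not reflectionless on any open arc $I$; thus for every $I$ there is a right limit --- namely $\omega$ --- that is not reflectionless on $I$, and Corollary~\ref{C1.5} gives a strong natural boundary for $\mu$-a.e.\ $\omega$. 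I expect the main obstacle to be the measurability input in the middle paragraph: one must be sure that ``continue $f_-^b$ analytically across $I$ and evaluate at $0$'' is a genuine Borel operation on $\{b_n\}_{n\le-1}$, so that reflectionlessness really does force determinism; the ergodic-theoretic steps and the manipulations with \eqref{1.6} are routine by comparison.
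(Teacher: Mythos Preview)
Your proof is correct, and the core mechanism---reflectionlessness forces determinism---is the same as the paper's, but the packaging differs in two places. The paper first shows (Lemma~6.3) that the set of sequences for which \eqref{1.7} holds on a given arc is measurable and shift-invariant, so by ergodicity either a.e.\ $\omega$ already yields a strong natural boundary or there is an arc on which \eqref{1.7} holds almost surely; in the second case Theorem~\ref{T1.4} makes \emph{every} right limit of $\{a_n(\omega)\}$ reflectionless on that arc, and a separate lemma (Lemma~6.4) identifies the family of a.s.\ right limits with $\supp(\mu)$, so all of $\supp(\mu)$ lies in $R_I$; the compactness statement Theorem~\ref{T2.2} then gives $b_0$ as a \emph{continuous} function of $\{b_n\}_{n\le-1}$ on $\supp(\mu)$, whence determinism. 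You instead (i) replace the compactness argument of Theorem~\ref{T2.2} by a Lusin--Souslin step, obtaining Borel rather than continuous dependence of $b_0$ on the past, and (ii) replace the ``right limits $=\supp(\mu)$'' lemma by the single observation, via Poincar\'e recurrence, that $\omega$ is a right limit of itself, after which Corollary~\ref{C1.5} finishes. Your route is more self-contained and avoids the $E$-space measurability lemma for the $L^1$ condition; the paper's route has the by-product that the determining map is actually continuous on $\supp(\mu)$, not merely Borel. One small remark: the extra sentences showing that all the past $\sigma$-algebras coincide are not needed, since the paper's definition of ``deterministic'' is exactly $a_0\in\sigma(a_n:n\le-1)$, which you have already established once $\mu(R_I)=1$.
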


\begin{remarks} 1. So far as we know, all previous results on random power series rely on independence and only obtain natural
boundaries, not strong natural boundaries.

\smallskip
2. Recall that a stochastic process $\{a_n(\omega)\}$ is deterministic if $a_0(\omega)$ is a measurable function of
$\{a_n(\omega)\}_{n=-\infty}^{-1}$ for a.e.\ $\omega$, and nondeterministic if it is not deterministic.

\smallskip
3. By taking an average of a deterministic and a nondeterministic process, it is easy to see that ergodicity is essential
for this theorem to be true.
\end{remarks}

\begin{theorem}\lb{T1.8} Let $f\colon\partial\bbD\to\bbC$ be a bounded and piecewise continuous function with only a
finite number of discontinuities, at one of which the one-sided limits exist and are unequal. Then for any irrational
number $q$ and every
$\theta\in\bbR$, we have that
\begin{equation} \lb{1.12}
\sum_{n=0}^\infty f(e^{2\pi i (qn+\theta)}) z^n
\end{equation}
has a strong natural boundary.
\end{theorem}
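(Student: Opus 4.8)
The plan is to deduce Theorem~\ref{T1.8} from Corollary~\ref{C1.5}. Writing $a_n = f(e^{2\pi i(qn+\theta)})$, it suffices to exhibit a single right limit $\{b_n\}_{n\in\bbZ}$ of $\{a_n\}$ that is reflectionless on no open arc of $\partial\bbD$. Since $q$ is irrational, every right limit is again an ``orbit sampling'' of $f$: by density of $\{qn \bmod 1\}$ one may pick $n_k\to\infty$ with $qn_k\to\gamma$ in $\bbR/\bbZ$, and then $b_n := \lim_k a_{n+n_k} = f(e^{2\pi i(qn+\theta')})$ with $\theta' := \theta+\gamma$, \emph{provided} the orbit $O := \{qn+\theta' : n\in\bbZ\}$ avoids the finitely many discontinuities $d_0,\dots,d_p$ of $f$ — which holds for all $\gamma$ outside a countable set. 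I label $d_0$ as a point where the one-sided limits of $f$ exist and differ, $J_0 := f(d_0^+)-f(d_0^-)\neq 0$, and set $J_i := f(d_i^+)-f(d_i^-)$ at the other discontinuities.

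The next step is to recast ``reflectionless on $I$'' in Fourier terms. Let $T$ be the distribution on $\partial\bbD$ with $\hat T(n) = b_n$; it exists because $\{b_n\}$ is bounded. Via distributional boundary values — the radial boundary value of $f_+$ is the non-negative-frequency part of $T$, that of $f_-$ from outside $\overline{\bbD}$ the negative-frequency part, and the Cauchy transform of $T$ supplies the continuation in the converse direction — one gets: $\{b_n\}$ is reflectionless on the open arc $I$ if and only if $T|_I = 0$. So suppose, for contradiction, $T|_I = 0$ for some $I$, corresponding to an arc $\mathcal J\subset\bbR/\bbZ$ of length $\ell > 0$. Pick $\chi\in C^\infty(\bbR/\bbZ)$ with $\chi\not\equiv 0$ and $\supp\chi$ a compact subset of $\mathcal J$; then $\chi T = 0$, so $\sum_m \hat\chi(m)\, b_{k-m} = 0$ for every $k\in\bbZ$ (the convolution of $\hat\chi\in\ell^1$ with $(b_n)\in\ell^\infty$). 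Since $b_{k-m} = f(e^{2\pi i(q(k-m)+\theta')})$, writing $x = qk+\theta'$ this says the uniformly convergent sum
\[
g(x) := \sum_{m\in\bbZ} \hat\chi(m)\, f(e^{2\pi i(x-qm)})
\]
vanishes on the dense orbit $O$. Now $g$ is continuous off the countable set $\bigcup_i (d_i + q\bbZ)$, which is disjoint from $O$, and $g$ has one-sided limits everywhere (it is a uniform limit of functions that do); hence $g$ vanishes off that countable set, so all one-sided limits of $g$ — and therefore all its jumps — are $0$.

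The contradiction comes from the jump of $g$ along the $q$-orbit of $d_0$. Let $C_0$ index the discontinuities of $f$ lying on that orbit, written $d_i = d_0 + \nu_i q$ with distinct integers $\nu_i$ (so $\nu_0 = 0$). The summand $f(e^{2\pi i(\,\cdot\, - qm')})$ is continuous at $d_0 + qm$ unless $m' = m - \nu_i$ for some $i\in C_0$, so the jump of $g$ at $d_0 + qm$ equals $\sum_{i\in C_0} \hat\chi(m-\nu_i)\, J_i$. Setting this to $0$ for all $m\in\bbZ$ and passing to the Fourier series in $m$ gives $\bigl(\sum_{i\in C_0} J_i e^{2\pi i\nu_i t}\bigr)\,\chi(t) \equiv 0$. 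But $\sum_{i\in C_0} J_i e^{2\pi i\nu_i t}$ is a nonzero trigonometric polynomial — its constant coefficient is $J_0 \neq 0$ — hence has only finitely many zeros, forcing $\chi\equiv 0$, which is false. Thus $\{b_n\}$ is reflectionless on no arc, and Corollary~\ref{C1.5} yields the strong natural boundary. (Hecke's case of a single jump is $C_0 = \{0\}$, where the identity collapses to $J_0\,\chi\equiv 0$.)

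I expect the two ``dictionary'' steps to be the real work: first, pinning down rigorously that reflectionlessness across an arc is equivalent to the distribution $T$ vanishing on that arc (boundary values of $f_\pm$ and the Cauchy transform of $T$); second, the jump bookkeeping for $g$ — in particular the point that, because $O$ avoids all $d_i$, vanishing of $g$ on $O$ already forces \emph{every} one-sided limit of $g$ to vanish, which is exactly what lets a single genuine jump of $f$ propagate into a contradiction. The remaining ingredients — density of $\{qn\}$, the co-countable choice of $\gamma$ making $O$ avoid the $d_i$, and the convergence of $g$ and of the convolution identity — are routine.
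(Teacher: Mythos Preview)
Your argument is essentially correct but takes a considerably more elaborate route than the paper. The paper (proving the more general Theorem~\ref{T7.1}) does not analyze a single right limit via distribution theory; instead it manufactures \emph{two} right limits that feed directly into Theorem~\ref{T3.1}. By density of the orbit one picks $n_j,m_j\to\infty$ with $T^{n_j}(e^{i\theta})\to d_0$ from one side and $T^{m_j}(e^{i\theta})\to d_0$ from the other, so $a_{n_j}\to f(d_0^-)\neq f(d_0^+)\leftarrow a_{m_j}$. For every $\ell$ beyond some fixed $L$ the point $T^\ell(d_0)$ is a continuity point of $f$ (the orbit is injective, discontinuities are finite), whence $a_{n_j+\ell}$ and $a_{m_j+\ell}$ share the limit $f(T^\ell(d_0))$. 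Passing to subsequences yields right limits $b,c$ with $b_0\neq c_0$ but $b_\ell=c_\ell$ for all $\ell>L$, and Theorem~\ref{T3.1} closes the argument. No distributional boundary values, no convolution, no jump calculus --- just the determinism of reflectionless sequences already packaged in Theorem~\ref{T2.3}.

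Your single-right-limit route via the dictionary ``reflectionless on $I\Leftrightarrow T|_I=0$'' and the convolution identity $\sum_m\hat\chi(m)b_{k-m}=0$ is attractive machinery, and the trigonometric-polynomial endgame is clean. One genuine caution: you set $J_i:=f(d_i^+)-f(d_i^-)$ at \emph{every} discontinuity, but the stated hypothesis only guarantees one-sided limits at $d_0$. If some $d_i$ with $i\in C_0\setminus\{0\}$ lacks one-sided limits, then the finitely many discontinuous summands of $g$ at $d_0+qm$ need not have one-sided limits, and the jump computation breaks. Under the common convention that ``piecewise continuous'' already entails one-sided limits this is moot, but the paper's phrasing (``at one of which the one-sided limits exist'') deliberately does not assume it, and its proof never needs it. The gap is repairable --- for instance, once $g=0$ on a co-countable set you have $g=0$ a.e., and computing $\hat g(k)=\hat F(k)\,\chi(-kq)$ directly leads to a contradiction with the nondecay of $\hat F$ forced by the jump at $d_0$ --- but the paper's two-right-limit argument simply sidesteps the issue.
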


\begin{remark} This includes Hecke's famous example \cite{Hecke},
\begin{equation} \lb{1.13}
f(z)=\sum_{n=0}^\infty \{nq\} z^n
\end{equation}
where $\{x\}$ is the fractional part of $x\in\bbR$.
\end{remark}

The following is elementary and does not require our full machinery. It was suggested to us by the Wonderland theorem of
Simon \cite{Wonder} and seems to be new.

\begin{theorem} \lb{T1.9} Let $\Omega\subset\bbC$ be a compact set with more than one point. Let $\Omega^\infty$ be a countable
product of copies of $\Omega$ in the weak topology. Then $\{\{a_n\}\}\in\Omega^\infty \mid\sum_{n=0}^\infty a_n z^n$ has a
natural boundary on $\partial\bbD\}$ is a dense $G_\delta$ in $\Omega^\infty$.
\end{theorem}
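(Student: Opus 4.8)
\smallskip
\noindent\textbf{Proof proposal.} This will be a Baire category argument in the compact metric space $X:=\Omega^{\infty}$, whose topology is coordinatewise convergence. Writing $\mathcal N$ for the set of $\{a_n\}\in X$ such that $f(z)=\sum a_n z^{n}$ has a natural boundary, I plan to show separately that $\mathcal N$ is a $G_\delta$ and that $\mathcal N$ is dense; together these give the assertion.

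For the $G_\delta$ statement the first step is to recast ``$f$ continues analytically across some point of $\partial\bbD$'' in a form visibly stable under coordinatewise limits. For $w\in\bbD$ set $c_m(w)=f^{(m)}(w)/m!$ and let $\rho(f,w)=\bigl(\limsup_m|c_m(w)|^{1/m}\bigr)^{-1}$ be the radius of convergence of the Taylor series of $f$ at $w$; always $\rho(f,w)\ge 1-|w|$, since $f$ is analytic on $\bbD$. I would first verify the elementary equivalence: $f$ is continuable at some point of $\partial\bbD$ iff $\rho(f,w)>1-|w|$ for some $w\in\bbD$ with rational real and imaginary parts. (``If'': the Taylor series of $f$ at $w$ then converges on a disc containing $w/|w|\in\partial\bbD$ and agrees with $f$ on the connected set $\bbD\cap\{|z-w|<\rho(f,w)\}$, hence continues $f$ across $w/|w|$. ``Only if'': if $f$ continues across $z_0\in\partial\bbD$ to a disc $D(z_0,\varepsilon)$, then $f$ is genuinely analytic on the open set $\bbD\cup D(z_0,\varepsilon)$, so the Taylor radius of $f$ at any rational $w\in\bbD$ close enough to $z_0$ exceeds $|w-z_0|\ge 1-|w|$.) Granting this, the complement $\mathcal N^{c}$ is the countable union, over rational $w\in\bbD$, rational $t>1-|w|$, and $m_0\in\bbN$, of the sets $S(w,t,m_0):=\{\{a_n\}\in X: |c_m(w)|\le t^{-m}\text{ for all }m\ge m_0\}$. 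Since for fixed $w\in\bbD$ the quantity $c_m(w)=\sum_{n\ge m}\binom nm a_n w^{n-m}$ is the sum of a series converging uniformly in $\{a_n\}\in X$ (it is dominated by $C\sum_{n\ge m}\binom nm |w|^{n-m}<\infty$, with $C=\sup_{z\in\Omega}|z|$), the map $\{a_n\}\mapsto c_m(w)$ is continuous on $X$; hence each $S(w,t,m_0)$ is closed, $\mathcal N^{c}$ is $F_\sigma$, and $\mathcal N$ is a $G_\delta$.

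For density I must place a natural-boundary sequence in every nonempty basic open subset of $X$, i.e.\ in every set that constrains only $a_0,\dots,a_N$. This is the one place the hypothesis that $\Omega$ has more than one point enters: fix $p\ne q$ in $\Omega$, choose $a_0,\dots,a_N$ inside the prescribed open sets, and for $n>N$ put $a_n=q$ when $n\in\{N+2^{j}:j\ge 1\}$ and $a_n=p$ otherwise. Then
\[
f(z)=P(z)+p\,\frac{z^{N+1}}{1-z}+(q-p)\,z^{N}\sum_{j\ge 1}z^{2^{j}},
\]
with $P$ a polynomial, and $G(z)=\sum_{j\ge1}z^{2^{j}}$ has $\partial\bbD$ as a natural boundary by Hadamard's gap theorem. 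Since $q-p\ne 0$, continuability of $f$ at a point $z_0\in\partial\bbD$ would force continuability of $G$ at $z_0$ — directly if $z_0\ne 1$, and if $z_0=1$ because $G$ would then have at worst an isolated singularity at $1$, contradicting that every point of $\partial\bbD$ is singular for $G$. Hence $f$ has a natural boundary, and $\mathcal N$ meets the given open set.

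The step I expect to be the real obstacle is the reformulation at the start of the second paragraph. The naive choice — calling $\{a_n\}$ ``bad'' if $f$ extends, with a uniform bound, to some disc crossing $\partial\bbD$ — does not produce closed sets: under a coordinatewise limit a Montel/normal-families argument recovers the continuation only on a strictly smaller disc, and one is left chasing ever-shrinking discs. Using Taylor radii at rational interior points instead avoids normal families entirely and reduces closedness to continuity of $\{a_n\}\mapsto c_m(w)$. The remaining points (connectedness of the disc intersections, the glueing in the ``only if'' direction, and the three-term decomposition in the density argument) are routine.
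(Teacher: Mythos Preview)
Your proof is correct and complete. Both the $G_\delta$ and density arguments go through as written; the only places that need a sentence more of care are routine (e.g.\ in the ``only if'' direction, one should say that the continued $f$ is analytic on the open set $\bbD\cup D(z_0,\veps)$, so for rational $w$ near $z_0$ the Taylor radius is at least the distance from $w$ to the boundary of that union, which exceeds $1-|w|$).

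The approach, however, differs from the paper's in the $G_\delta$ half. The paper parametrizes the complement $\mathcal N^c$ by closed sets
\[
F_{\alpha,\beta,K,n}=\Bigl\{\{a_n\}\in\Omega^\infty:\ f\text{ continues to }\{|z|<1+\tfrac1n,\ \alpha<\arg z<\beta\}\text{ with }|f|\le K\Bigr\},
\]
over rational $\alpha,\beta$ and integers $K,n$, and proves closedness by Vitali/Montel: if $a^{(k)}\to a$ coordinatewise, then $f^{(k)}\to f$ uniformly on compacts of $\bbD$ (the coefficients are uniformly bounded), while the uniform bound $K$ on the fixed sector lets one extract a normal-families limit there which matches $f$ on the overlap. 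You instead encode continuability via Taylor radii at rational interior centres and reduce closedness to the continuity of each single coefficient map $\{a_n\}\mapsto c_m(w)$. Your route is more elementary---no normal-families machinery at all---while the paper's is shorter to state once Vitali is quoted. The density arguments are essentially identical (the paper uses $k!$ gaps, you use $2^j$ gaps, and both must in principle deal with the pole at $z=1$ coming from the geometric piece, which you do explicitly).

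One remark on your final paragraph: the ``naive'' approach you dismiss---fixing a region across $\partial\bbD$ together with a uniform sup bound---is exactly what the paper uses, and it \emph{does} give closed sets. The point is that with a uniform bound $K$ on a fixed open region, Montel produces a limit analytic on the \emph{same} open region, not a smaller one; there is no shrinking. Your alternative is perfectly valid, but the objection to the other method is not.
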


In Section~\ref{s2}, we prove Theorem~\ref{T1.3}. The key is a lemma of M.~Riesz whose proof we include for completeness. In Section~\ref{s3}, we prove Theorem~\ref{T1.4} using the theory of $H^p$ spaces on
a sector (Duren \cite{Duren}). In Section~\ref{s4}, we discuss gap theorems, including Theorem~\ref{T1.6}. In Section~\ref{s5}, we
discuss Szeg\H{o}'s theorem on finite-valued power series using Theorem~\ref{T1.4}. In Section~\ref{s6}, we discuss random
power series, including Theorem~\ref{T1.7}. In Section~\ref{s7}, we prove Theorem~\ref{T1.8}, following a spectral theory analysis
of Damanik--Killip \cite{DK}. In Section~\ref{s8}, we prove Theorem~\ref{T1.9}. 

We believe our work here opens up numerous new directions in the study of power series and of spectral theory. In particular,
there is a dynamical view of reflectionless in spectral theory (see \cite{BRS}) and there is the distinction in spectral theory
between pure point and singular continuous spectra. What are the analogs for power series? 

\medskip
We would like to thank Shmuel Agmon, John Garnett, Jean-Pierre Kahane, Rowan Killip, and Genadi Levin for useful discussions.

%%%%%%%%%%%%%%%%%%%%%%%%%%%%%%%%%%%%%%%%%%%%%%%%%%%%%%%%%%%%%%%%%%%%%%
\section{Classical Natural Boundaries} \lb{s2}
%%%%%%%%%%%%%%%%%%%%%%%%%%%%%%%%%%%%%%%%%%%%%%%%%%%%%%%%%%%%%%%%%%%%%%

In this section, we prove Theorem~\ref{T1.3}. The key will be a lemma of M.~Riesz \cite{MRie} used by many other authors
in the study of natural boundaries. The use of right limits and reflectionless power series can be viewed as a tool for
squeezing maximum benefit from Riesz's lemma.

Given a power series $f(z)=\sum_{n=0}^\infty a_n z^n$, where the $a_n$ obey \eqref{1.4}, we define
\begin{equation} \lb{2.1}
f_+^{(N)}(z)=\sum_{n=0}^\infty a_{n+N} z^n \qquad
f_-^{(N)}(z)=\sum_{n=-N}^{-1} a_{n+N} z^n
\end{equation}
so for $z\in\bbD\setminus\{0\}$,
\begin{equation} \lb{2.2}
f_+^{(N)}(z) + f_-^{(N)}(z)=z^{-N} f(z)
\end{equation}

Clearly, $f_-$ is defined and analytic on $\bbC\setminus\{0\}$, $f_+$ is defined initially on $\bbD$, but by \eqref{2.2},
has analytic continuation to any region that $f$ does.

\begin{theorem}[M.~Riesz's Lemma]\lb{T2.1} Suppose $\{a_n\}_{n=0}^\infty$ obeys \eqref{1.4} and that $f$ has an analytic
continuation to a neighborhood of $\bbD\cup S$ where
\begin{equation} \lb{2.3}
S=\{re^{i\theta}\mid 0<r\leq R,\, \alpha\leq\theta\leq\beta\}
\end{equation}
for some $R>1$ and $\alpha <\beta$. Then
\begin{equation} \lb{2.4}
\sup_{\substack{z\in S \\ N=0,1,2,\dots}} \abs{f_+^{(N)}(z)} <\infty
\end{equation}
\end{theorem}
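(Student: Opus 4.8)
The plan is to split $S$ into the piece near $\partial\bbD$ and the rest: the rest is handled by elementary estimates, and on the piece near $\partial\bbD$ one must genuinely use the analytic continuation across the arc $A:=\{e^{i\theta}\mid\alpha\le\theta\le\beta\}$, which I would do by a localization argument on the boundary circle. For the elementary part, fix a small $\delta>0$. If $z\in S$ and $\abs z\le 1-\delta$, the defining series gives $\abs{f_+^{(N)}(z)}\le \sup_n\abs{a_n}/(1-\abs z)\le \sup_n\abs{a_n}/\delta$. If $z\in S$ and $\abs z\ge 1+\delta$, use \eqref{2.2}: $f_+^{(N)}(z)=z^{-N}f(z)-f_-^{(N)}(z)$, where $\abs{z^{-N}f(z)}\le\abs{f(z)}\le\max_S\abs f<\infty$ and $f_-^{(N)}(z)=\sum_{m=0}^{N-1}a_mz^{m-N}$ obeys $\abs{f_-^{(N)}(z)}\le\sup_n\abs{a_n}\sum_{j\ge1}\abs z^{-j}\le\sup_n\abs{a_n}/\delta$. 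So everything reduces to bounding $f_+^{(N)}$ uniformly on the compact annular sector $K:=\{z\in S\mid 1-\delta\le\abs z\le 1+\delta\}$, which sits inside the region where $f$ is analytic and where, moreover, $f$ extends across an open arc $\wti A\supset A$.

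On $K$ neither the series nor \eqref{2.2} is uniform, since one is exactly at the boundary-of-convergence scale. Here I would bring in the boundary distribution $g$ of $f$ — the $2\pi$-periodic distribution with $\widehat g(n)=a_n$ for $n\ge 0$ and $0$ otherwise, well defined because $\sup_n\abs{a_n}<\infty$, and real-analytic on $\wti A$ because $f$ continues there. Choose $\psi\in C^\infty(\partial\bbD)$ with $\psi\equiv 1$ near $A$ and $\supp\psi\subset\wti A$, and split $g=\psi g+(1-\psi)g$, i.e.\ $f=f_1+f_2$ where $f_j$ has Taylor coefficients $\widehat{\psi g}(n)$, resp.\ $\widehat{(1-\psi)g}(n)$, for $n\ge 0$. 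Now $\psi g$ is smooth (a smooth function times the real-analytic $g\!\restriction_{\wti A}$), so $\{\widehat{\psi g}(n)\}\in\ell^1$; hence for $\abs z\le 1$ one gets $\abs{(f_1)_+^{(N)}(z)}\le\sum_k\abs{\widehat{\psi g}(N+k)}\le\norm{\widehat{\psi g}}_{\ell^1}$, and for $z\in K$ with $\abs z\ge 1$, applying \eqref{2.2} to $f_1$, $\abs{(f_1)_+^{(N)}(z)}\le\abs{f_1(z)}+\norm{\widehat{\psi g}}_{\ell^1}$, with $f_1=f-f_2$ analytic and bounded near $A$. For $f_2$: the sequence $\widehat{(1-\psi)g}$ is bounded (it is $\widehat{1-\psi}\in\ell^1$ convolved with the bounded $\widehat g$), and $(1-\psi)g$ is supported away from $A$; realizing $f_2$ as the Cauchy transform of $(1-\psi)g$ one computes, for $z$ near $A$,
\[
(f_2)_+^{(N)}(z)=\frac1{2\pi}\Bigl\langle g,\ e^{-iN\theta}\,\tfrac{1-\psi(\theta)}{1-ze^{-i\theta}}\Bigr\rangle=\sum_{n\ge0}a_n\,\widehat{k_z}(N-n),\qquad k_z(\theta):=\tfrac{1-\psi(\theta)}{2\pi\,(1-ze^{-i\theta})}.
\]
The key point is that $k_z\in C^\infty(\partial\bbD)$ with $C^2$-norm bounded uniformly for $z$ in a neighborhood of $A$: the only singularity of the Cauchy kernel, at $\theta=\arg z\in[\alpha,\beta]$, is killed by $1-\psi$, which vanishes identically near $A$. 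Hence $\norm{\widehat{k_z}}_{\ell^1}\le C$ uniformly, so $\abs{(f_2)_+^{(N)}(z)}\le\sup_n\abs{a_n}\cdot C$ on $K$. Adding the two estimates bounds $f_+^{(N)}$ on $K$, and hence on all of $S$.

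The main obstacle, in my view, is recognizing that a naive contour argument is doomed: writing $f_+^{(N)}(z)=\tfrac1{2\pi i}\oint_\Gamma f(\zeta)\,\zeta^{-N}(\zeta-z)^{-1}\,d\zeta$ forces $\Gamma$ to wind around $0$, hence (since $f$ may have a natural boundary on $\partial\bbD\setminus A$) to dip inside $\bbD$ at those angles, where it contributes an uncontrollable factor $\rho^{-N}$ with $\rho<1$. The boundary localization is exactly the device that sidesteps this — the part that could carry a natural boundary ($f_2$) becomes a Cauchy transform of a distribution supported away from $A$, harmless near $A$, while the part that "feels" the arc ($f_1$) has absolutely summable coefficients. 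Setting up the cutoff and checking the uniform $C^2$ (equivalently, $\ell^1$-Fourier) bound on $k_z$ is the one place requiring honest, though routine, care; the rest is bookkeeping with \eqref{2.2} and the elementary tail bounds.
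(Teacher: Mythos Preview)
Your argument is correct, but the paper's proof is much shorter and takes a completely different route. Instead of a Fourier/microlocal decomposition on $\partial\bbD$, the paper uses a bare maximum-principle trick: widen $S$ slightly to a sector $\ti S$ with the same $R$ but angular range $[\ti\alpha,\ti\beta]\supset[\alpha,\beta]$, set $z_1=e^{i\ti\alpha}$, $z_2=e^{i\ti\beta}$, and look at $g_+^{(N)}(z)=(z-z_1)(z-z_2)f_+^{(N)}(z)$. On $\partial\ti S\cap\bbD$ the geometric-series bound $\abs{f_+^{(N)}(z)}\le(1-\abs z)^{-1}\sup_n\abs{a_n}$ blows up only at $z_1,z_2$, and the prefactor kills that; on $\partial\ti S\setminus\ol\bbD$ one uses \eqref{2.2} with $\abs{z^{-N}}<1$ together with the analogous bound $\abs{f_-^{(N)}(z)}\le(1-\abs z^{-1})^{-1}\sup_n\abs{a_n}$, and again the zeros at $z_1,z_2$ absorb the blow-up. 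The maximum principle on $\ti S$ finishes.

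Your approach is heavier but conceptually transparent: it explains \emph{why} analytic continuation across the arc helps, namely because it makes the boundary distribution locally smooth, so a cutoff separates a piece with $\ell^1$ Taylor tails from a piece whose Cauchy transform is uniformly smooth near the arc. The paper's approach, by contrast, never looks at the boundary at all---the multiplication by $(z-z_1)(z-z_2)$ is an ad hoc device that happens to tame the corner singularities, and the whole proof fits in a few lines once one has the two elementary tail bounds. Your remark that ``a naive contour argument is doomed'' is well taken, but the right fix is not to localize the integrand---it is to abandon the integral representation entirely and use the maximum principle instead.
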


\begin{remark} Riesz's lemma is usually in terms of $z^{-N} (f(z)-\sum_{n=0}^{N-1} a_n z^n)$, but this is eactly
$f_+^{(N)}(z)$.
\end{remark}

\begin{proof} While the result is classical and appears in many places (e.g., \cite{Remm}), we sketch the proof for completeness.

By comparing with a geometric series, for $z\in\bbD$ and all $N$\!,
\begin{equation} \lb{2.5}
\abs{f_+^{(N)}(z)}\leq (1-\abs{z})^{-1} \sup_n\, \abs{a_n}
\end{equation}
and similarly, for $z\in\bbC\setminus\ol{\bbD}$,
\begin{equation} \lb{2.6}
\abs{f_-^{(N)}(z)} \leq (1-\abs{z}^{-1})^{-1} \sup_n\, \abs{a_n}
\end{equation}

Let $\ti S$ have the form of $S$ with $\alpha,\beta$ replaced by $\ti\alpha,\ti\beta$ and $\ti\alpha <\alpha <\beta <\ti\beta$
so that $\ti S$ lies in the neighborhood of analyticity of $f$. Let $z_1=e^{i\ti\alpha}$, $z_2=e^{i\ti\beta}$, and define
\begin{equation} \lb{2.7}
g_\pm^{(N)}(z) = (z-z_1)(z-z_2) f_\pm^{(N)}(z)
\end{equation}

Clearly, \eqref{2.4} is implied by
\begin{equation} \lb{2.8}
\sup_{\substack{z\in\ti S \\ N=0,1,2,\dots}} \abs{g_+^{(N)}(z)}<\infty
\end{equation}
By the maximum principle, we need only check this on $\partial\ti S\setminus\{z_1,z_2\}$.

Because of \eqref{2.5} and the zeros of $g_+^{(N)}$, we have
\begin{equation} \lb{2.9}
\sup_{\substack{z\in\partial\ti S\cap\bbD \\ N=0,1,2,\dots}} \abs{g_+^{(N)}(z)} <\infty
\end{equation}
Similarly, by \eqref{2.6},
\begin{equation} \lb{2.10}
\sup_{\substack{z\in\partial\ti S\cap\bbC\setminus\ol{\bbD} \\ N=0,1,2,\dots }} \abs{g_-^{(N)}(z)} <\infty
\end{equation}
Since $\abs{z^{-N}}<1$ on $\bbC\setminus\ol{\bbD}$, \eqref{2.2} and \eqref{2.10} imply
\begin{equation} \lb{2.11}
\sup_{\substack{z\in\partial\ti S\cap\bbC\setminus\ol{\bbD} \\ N=0,1,2,\dots}} \abs{g_+^{(N)}(z)}<\infty
\end{equation}
proving \eqref{2.8}.
\end{proof}

\begin{proof}[Proof of Theorem~\ref{T1.3}] Suppose $a_{N_j+n}\to b_n$ for $n\in\bbZ$ and let $f_\pm$ be the functions in
\eqref{1.6}. Then, by estimating Taylor series, we have
\begin{equation} \lb{2.12}
f_+^{(N_j)}(z)\to f_+(z) \qquad
f_-^{(N_j)}(z) \to f_-(z)
\end{equation}
uniformly on compact subsets of $\bbD$ and $\bbC\cup\{\infty\}\setminus\ol{\bbD}$, respectively.

If $f$ has an analytic continuation to a neighborhood of $I=\{e^{i\theta}\mid\alpha_0 <\theta <\beta_0\}$, we can apply
Riesz's lemma for any $S$ of the form \eqref{2.3} with $\alpha_0 <\alpha<\beta<\beta_0$ and some suitable $R>1$ (depending
on $\alpha,\beta)$. Thus, by the Vitali convergence theorem, $f_+^{(N_j)}$ converges uniformly on $S$, so $f_+(z)$
has an analytic continuation to $S$. Moreover, by the analytic continuation of \eqref{2.2} to $S$ and $z^{-N_j}\to 0$
on $\bbC\setminus\bbD$, we see on $S\setminus\ol{\bbD}$,
\begin{equation} \lb{2.13}
f_+(z)+f_-(z)=0
\end{equation}
Thus, $\{b_n\}_{n=-\infty}^\infty$ is reflectionless across $I$.
\end{proof}

The proof shows that if $\Omega_I=\bbC\cup\{\infty\}\setminus(\partial\bbD\setminus I)$ and if $\calR$ is the family of
all right limits and $f_b(z)$ the function on $\Omega_I$ equal to $\sum_{n=0}^\infty b_n z^n$ on $\bbD$, then

\begin{theorem} \lb{T2.2} Fix $I$ an interval in $\partial\bbD$ and $A\in (0,\infty)$. Let $\calR$ be the set of
two-sided sequences reflectionless across $I$ with
\begin{equation} \lb{2.14}
\sup_{-\infty < n < \infty} \abs{b_n} \leq A
\end{equation}
Then, one has that, for any compact $K\subset\Omega_I$,
\begin{equation} \lb{2.15a}
\sup_{b\in\calR}\, \sup_{z\in K}\, \abs{f_b(z)} <\infty
\end{equation}
Moreover, $\calR$ is compact in the topology of uniform convergence on compacts of $\Omega_I$ and on
\begin{equation} \lb{2.15}
\{\{b_n\}_{n=-\infty}^{-1}\mid\{b_n\}_{n=-\infty}^\infty\in\calR\}
\end{equation}
$b_0,b_1,\dots$ are continuous functions of $\{b_n\}_{n=-\infty}^{-1}$. In fact, for any $k<\ell$, there is a homeomorphism
of $\{b_n\}_{n=-\infty}^k$ to $\{b_n\}_{n=\ell}^\infty$ by associating the two ends of a two-sided sequence.
\end{theorem}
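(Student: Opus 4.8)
The plan is to deduce everything from M.~Riesz's lemma (Theorem~\ref{T2.1}) together with the Vitali/Montel compactness machinery already used in the proof of Theorem~\ref{T1.3}, now applied uniformly over the whole family $\calR$ rather than to a single right limit. First I would establish \eqref{2.15a}. Fix a compact $K\subset\Omega_I$ and write $K=K_1\cup K_2$ with $K_1\subset\ol\bbD$ and $K_2\subset\bbC\cup\{\infty\}\setminus\bbD$, each compact. On $K_2$ the bound $\sup_{b\in\calR}\sup_{z\in K_2}\abs{f_b(z)}\le\sup_{z\in K_2}(1-\abs z^{-1})^{-1}A<\infty$ is immediate from the geometric-series estimate \eqref{2.6} applied to $f_-$ (and the reflectionless relation $f_b=-f_-$ on $\bbC\setminus\ol\bbD$). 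On $K_1$ one uses that $f_b$ extends across $I$; since $K_1$ is compact in $\bbC\cup\{\infty\}\setminus(\partial\bbD\setminus I)$, it is covered by finitely many closed sectors $S$ of the form \eqref{2.3} (rotated to straddle subarcs of $I$) together with a compact subset of $\bbD$ bounded away from $\partial\bbD$; on the latter \eqref{2.5} gives the bound, and on each $S$ one runs the proof of Riesz's lemma verbatim but tracking constants: the auxiliary function $g_+^{(N)}$ of \eqref{2.7} is replaced by $(z-z_1)(z-z_2)f_b(z)$, its sup on $\partial\ti S\cap\bbD$ is controlled by \eqref{2.5} with constant $A$, on $\partial\ti S\cap(\bbC\setminus\ol\bbD)$ by \eqref{2.6} with constant $A$ via $f_b=-f_-$, and the maximum principle then bounds it on all of $\ti S$ by a constant depending only on $A$, $\ti S$, and $\min_{z\in S}\abs{(z-z_1)(z-z_2)}$ — not on $b$. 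Taking the max over the finite cover yields \eqref{2.15a}.

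Next I would prove compactness of $\calR$ in the topology of uniform convergence on compacts of $\Omega_I$. Given a sequence $b^{(m)}\in\calR$, the coefficients $\{b^{(m)}_n\}$ are bounded by $A$, so by a diagonal argument we may pass to a subsequence along which $b^{(m)}_n\to b_n$ for every $n\in\bbZ$, with $\abs{b_n}\le A$. As in \eqref{2.12} this forces $f_{b^{(m)}}\to f_b$ uniformly on compacts of $\bbD$ and of $\bbC\cup\{\infty\}\setminus\ol\bbD$. By \eqref{2.15a} the family $\{f_{b^{(m)}}\}$ is uniformly bounded on compacts of $\Omega_I$, hence normal by Montel; any locally uniform limit must agree with $f_b$ on $\bbD$, so the full sequence $f_{b^{(m)}}$ converges locally uniformly on $\Omega_I$ to an analytic function $F$ extending $\sum b_n z^n$. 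It remains to check $b\in\calR$, i.e.\ $b$ is reflectionless on $I$: on $\bbC\setminus\ol\bbD$ we have $F=\lim f_{b^{(m)}}=-\lim f_{b^{(m)},-}=-f_{b,-}$, which is precisely \eqref{1.6x}. Thus $\calR$ is sequentially compact, and since these are metrizable topologies, compact.

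Finally, the continuity/homeomorphism statement. Map $\{b_n\}_{n=-\infty}^{-1}\mapsto f_{b,-}$, an analytic function on $\bbC\cup\{\infty\}\setminus\ol\bbD$; by the reflectionless relation its negative is the continuation of $f_+$ to $\bbC\setminus\ol\bbD$, hence the full analytic function $F$ on $\Omega_I$ is determined by the left half, and in particular so are $b_0,b_1,\dots$ as Taylor coefficients of $F$ at $0$. Continuity of $\{b_n\}_{n\le-1}\mapsto\{b_n\}_{n\ge0}$ on $\calR$: if $b^{(m)}\to b$ in the left-half coordinates, then $f_{b^{(m)},-}\to f_{b,-}$ locally uniformly on $\bbC\cup\{\infty\}\setminus\ol\bbD$; combined with compactness from the previous paragraph (which pins down all subsequential limits) one gets $f_{b^{(m)}}\to f_b$ locally uniformly on $\Omega_I$, whence $b^{(m)}_n\to b_n$ for all $n\ge0$. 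The inverse map is continuous by the symmetric argument using $f_+$ on $\bbD$ and its continuation, so we have a homeomorphism; the same reasoning with the pair of indices $k<\ell$ in place of $-1,0$ (splitting $\bbZ$ at $k$ and at $\ell-1$, which differ by a fixed shift) gives the general homeomorphism between $\{b_n\}_{n\le k}$ and $\{b_n\}_{n\ge\ell}$. The main obstacle is the \emph{uniform} Riesz estimate: one must make sure the bound coming from the maximum principle depends only on the sector and the coefficient bound $A$, which is why the zeros $z_1,z_2$ must be placed on the boundary of a slightly larger sector $\ti S$ that stays inside the common region of analyticity — a region that, crucially, is the \emph{same} for every $b\in\calR$ since reflectionlessness on $I$ is built into the definition of $\calR$.
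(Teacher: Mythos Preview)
Your argument is correct and follows essentially the same route as the paper: the uniform bound \eqref{2.15a} comes from rerunning the Riesz--lemma estimate with constants depending only on $A$ and the sector, compactness from Montel together with closedness of the reflectionless condition, and the continuity/homeomorphism from the fact that one tail determines $f_b$ on all of $\Omega_I$ (the paper phrases the last step via translates of reflectionless sequences, which is exactly your shift remark). One small slip to fix: as written your $K_2\subset\bbC\cup\{\infty\}\setminus\bbD$ may contain points of $I$ (where $\abs{z}=1$), so $(1-\abs{z}^{-1})^{-1}A$ blows up there --- just let the sectors $S$ you already use for $K_1$ (which extend to $\abs{z}=R>1$) absorb a neighborhood of $K\cap I$, and take $K_2$ compact in $\{\abs{z}>1\}\cup\{\infty\}$.
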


\begin{proof} If $K\subset\bbD$ or $K\subset\bbC\setminus\ol{\bbD}$, one can use \eqref{2.5} or \eqref{2.6} for
$f_b$ and $\sup_n \abs{b_n}$. For $K$ straddling $I$, use the argument in the proof of Riesz's lemma and the fact that
$f_b$ is $\sum_{n=0}^\infty b_n z^n$ on $\bbD$ and $-\sum_{n=-\infty}^{-1} b_nz^n$ on $\bbC\setminus\ol{\bbD}$. This
proves \eqref{2.15a}.

Compactness then follows from Montel's theorem and the fact that, by these bounds, the set of reflectionless functions
is closed. The continuity is immediate if one notes that $\{b_n\}_{n=-\infty}^{-1}$ determines $f$ near $\infty$ and
$f$ near $0$ determines an $\{b_n\}_{n=0}^\infty$. The ``in fact'' statement uses that if $\{b_n\}_{n=0}^\infty$ is
reflectionless, so are its translates.
\end{proof}

Since Theorem~\ref{T1.3} says that if some right limit is not reflectionless, then $f$ has a natural boundary, and
since reflectionless $\{b_n\}_{n=-\infty}^\infty$ have one half determining the other half, we have the following,
which implies almost all the natural boundary results of this paper:

\begin{theorem}\lb{T2.3} Let $\{a_n\}_{n=0}^\infty$ be a bounded sequence with two right limits, $\{b_n\}_{n=-\infty}^\infty$
and $\{c_n\}_{n=-\infty}^\infty$, that obey
\begin{equation} \lb{2.14x}
b_0\neq c_0
\end{equation}
and either for some $k>0$ and all $j\geq k$ or for some $k<0$ and all $j\leq k$,
\begin{equation} \lb{2.15x}
b_j=c_j
\end{equation}
Then $\sum_{n=0}^\infty a_n z^n$ has a natural boundary.
\end{theorem}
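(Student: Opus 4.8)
The plan is to argue by contradiction. Suppose $f(z)=\sum_{n=0}^\infty a_n z^n$ does not have a natural boundary; then $f$ has an analytic continuation to a neighborhood of some open interval $I\subset\partial\bbD$. By Theorem~\ref{T1.3}, \emph{every} right limit of $\{a_n\}$ is reflectionless on $I$, in particular both $\{b_n\}_{n=-\infty}^\infty$ and $\{c_n\}_{n=-\infty}^\infty$. Let $f_{b,\pm}$ and $f_{c,\pm}$ be the functions attached to these sequences via \eqref{1.6}, let $\Omega_I=\bbC\cup\{\infty\}\setminus(\partial\bbD\setminus I)$, and put $h_\pm=f_{b,\pm}-f_{c,\pm}$. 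Reflectionlessness says that $f_{b,+}$ and $f_{c,+}$ continue analytically to $\Omega_I$ with $f_{b,+}+f_{b,-}=0$ and $f_{c,+}+f_{c,-}=0$ on $\bbC\setminus\ol{\bbD}$; subtracting, $h_+$ is analytic on the connected set $\Omega_I$, $h_-$ is analytic on $\bbC\cup\{\infty\}\setminus\{0\}$ with $h_-(\infty)=0$, and $h_++h_-=0$ on $\bbC\setminus\ol{\bbD}$. The goal is to deduce $b_0=c_0$, contradicting \eqref{2.14x}.

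First suppose \eqref{2.15x} holds for some $k>0$ and all $j\ge k$. Then on $\bbD$ we have $h_+(z)=\sum_{n=0}^{k-1}(b_n-c_n)z^n$, so by uniqueness of analytic continuation $h_+$ equals this polynomial on all of $\Omega_I$, in particular on $\bbC\setminus\ol{\bbD}$. There $h_+=-h_-\to 0$ as $z\to\infty$, and a polynomial with limit $0$ at $\infty$ is identically $0$; hence $b_n=c_n$ for $0\le n\le k-1$, and in particular $b_0=c_0$.

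Now suppose \eqref{2.15x} holds for some $k<0$ and all $j\le k$. Then $h_-(z)=\sum_{n=k+1}^{-1}(b_n-c_n)z^n$ is a finite sum of strictly negative powers of $z$, hence analytic on $\bbC\setminus\{0\}$. Since $\Omega_I\setminus\{0\}$ is connected and $h_+=-h_-$ on the nonempty open set $\bbC\setminus\ol{\bbD}$, the identity theorem gives $h_+=-h_-$ throughout $\Omega_I\setminus\{0\}$. But $0\in\bbD\subset\Omega_I$, so $h_+$ is analytic at $0$; thus $-h_-$ extends holomorphically across $0$, which is impossible unless $h_-\equiv 0$ (a nonzero finite sum of negative powers of $z$ has a pole at $0$). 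Therefore $b_n=c_n$ for all $n\le -1$, whence $h_+\equiv 0$ on $\bbC\setminus\ol{\bbD}$ and, $h_+$ being analytic on the connected set $\Omega_I$, $h_+\equiv 0$; in particular $b_n=c_n$ for all $n\ge 0$, so $b_0=c_0$.

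The argument is short once the domains are kept straight, and that bookkeeping is the one delicate point: reflectionlessness must be used to move $f_{b,+}$ and $f_{c,+}$ onto the connected set $\Omega_I$, and one must note that the only possible singularity of $h_-$ is the pole at $0$, so that the identity theorem together with the ``polynomial (resp.\ Laurent tail) vanishing at $\infty$ (resp.\ at $0$)'' observation can be applied. Conceptually the proof is nothing more than the principle, already recorded after the proof of Theorem~\ref{T1.3} and in Theorem~\ref{T2.2}, that a reflectionless two-sided sequence is determined by either of its two half-lines: condition \eqref{2.15x} forces $\{b_n\}$ and $\{c_n\}$ to agree on a cofinite half-line, hence everywhere, contradicting \eqref{2.14x}.
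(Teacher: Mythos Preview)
Your proof is correct and follows essentially the same idea as the paper. The paper's argument (given only as a brief remark) shifts the two right limits so that the agreeing half-line becomes exactly $\{n\ge 0\}$ (or $\{n\le -1\}$): e.g., for $k>0$ one replaces $\{b_n\},\{c_n\}$ by $\{b_{n+k}\},\{c_{n+k}\}$, which then share the same $f_+$ but have different $f_-$'s, contradicting $f_+=-f_-$ on $\bbC\setminus\ol{\bbD}$. You instead keep the sequences fixed and work with the differences $h_\pm$, exploiting that the hypothesis makes one of them a polynomial (or a finite Laurent tail) and then using the vanishing at $\infty$ (or removability at $0$) to kill it. These are two phrasings of the same analytic continuation argument; your version is more explicit, the paper's shift is slightly slicker, but there is no substantive difference.
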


\begin{remarks} 1. For example, if $k>0$, we look at the right limits $\{b_{n-k}\}_{n=-\infty}^\infty$ and
$\{c_{n-k}\}_{n=-\infty}^\infty$ which have the same $f_+$'s but unequal $f_-$'s.

\smallskip
2. In the next section, we extend this to conclude strong natural boundaries.
\end{remarks}

%%%%%%%%%%%%%%%%%%%%%%%%%%%%%%%%%%%%%%%%%%%%%%%%
\section{Strong Natural Boundaries} \lb{s3}
%%%%%%%%%%%%%%%%%%%%%%%%%%%%%%%%%%%%%%%%%%%%%%%%

In this section, we prove Theorem~\ref{T1.4}. We suppose that $f$ obeys \eqref{1.7} where $I=(\alpha,\beta)$.
Since \eqref{1.7} implies $\int_0^1 (\int_\alpha^\beta \abs{f(re^{i\theta})} \f{d\theta}{2\pi})\, dr <\infty$,
for a.e.\ $\theta_0$ in $(\alpha,\beta)$, we have $\int_0^1 \abs{f(re^{i\theta})}\, dr <\infty$. So, for a sequence
$\veps_n\downarrow 0$, we have $f\in E(S_n)$, where $S_n=\{re^{i\theta}\mid 0<r<1,\, \alpha+\veps_n <\theta <
\beta-\veps_n\}$ and $E(S_n)$ is the space introduced in \cite[Sect.~10.1]{Duren}. It follows that:
\begin{SL}
\item[(a)] $\lim_{r\uparrow 1} f(re^{i\theta}) = f(e^{i\theta})$ exists for a.e.\ $\theta\in(\alpha,\beta)$
(\cite[Thm.~10.3]{Duren}).
\item[(b)] For all $\veps$, $\int_{\alpha+\veps}^{\beta-\veps} \abs{f(e^{i\theta})}\f{d\theta}{2\pi} <\infty$
(\cite[Thm.~10.3]{Duren}).
\item[(c)] For every $\veps>0$,
\begin{equation} \lb{3.1}
\lim_{r\uparrow 1} \int_{\alpha+\veps}^{\beta-\veps} \abs{f(re^{i\theta}) - f(e^{i\theta})}\, \f{d\theta}{2\pi} =0
\end{equation}
\item[(d)] If we define
\begin{equation} \lb{3.2}
F(z)=\int_{\alpha+\veps}^{\beta-\veps} f(e^{i\theta}) (e^{i\theta}-z)^{-1}\, \f{d\theta}{2\pi i}
\end{equation}
then $F$ is analytic in $\bbC\setminus\{e^{i\theta}\mid \alpha+\veps <\theta < \beta-\veps\}$ and
\begin{equation} \lb{3.3}
\lim_{r\uparrow 1}\, F(re^{i\theta}) -\lim_{r\downarrow 1}\, F(re^{i\theta}) =f(e^{i\theta})
\end{equation}
(follows from \cite[Thm.~10.4]{Duren}).
\end{SL}

We also need the following Painlev\'e-type theorem, which follows easily from Morera's theorem:
\begin{SL}
\item[(e)] If $f_+$ is analytic in $\bbD$, $f_-$ in $\bbC\setminus\ol{\bbD}$, and
\begin{equation} \lb{3.4}
\sup_{0<r<1} \int_\alpha^\beta \abs{f_+(re^{i\theta})}\, \f{d\theta}{2\pi} +
\sup_{1<r<2} \int_\alpha^\beta \abs{f_-(re^{i\theta})}\, \f{d\theta}{2\pi} <\infty
\end{equation}
and if for a.e.\ $\theta\in(\alpha,\beta)$,
\begin{equation} \lb{3.5}
f_+ (e^{i\theta}) = f_-(e^{i\theta})
\end{equation}
then there is $G$ analytic in $\bbC\setminus [\partial\bbD\setminus\{e^{i\theta}\mid\alpha<\theta<\beta\}]$ so that
$G=f_+$ on $\partial\bbD$ and $f_-$ on $\bbC\setminus\ol{\bbD}$.
\end{SL}

\begin{proof}[Proof of Theorem~\ref{T1.4}] Let $F$ be given by \eqref{3.2} and define
\begin{equation} \lb{3.6}
b_n = \int_{\alpha+\veps}^{\beta-\veps} e^{-in\theta} f(e^{i\theta})\, \f{d\theta}{2\pi}
\end{equation}
Then, by expanding $(e^{i\theta}-z)^{-1}$ in suitable geometric series, the Taylor expansion of $F$ near zero is
$\sum_{n=0}^\infty b_n z^n$ and near $\infty$ is $\sum_{n=-\infty}^{-1} b_n z^n$. Moreover, by the Riemann--Lebesgue
lemma and (b) above,
\begin{equation} \lb{3.7}
\lim_{\abs{n}\to\infty}\, b_n =0
\end{equation}

Let $c_n=a_n-b_n$ and
\begin{equation} \lb{3.8}
f_+(z)=\sum_{n=0}^\infty c_n z^n
\end{equation}
Then, by (a), for a.e.\ $\theta\in(\alpha+\veps,\beta-\veps)$,
\begin{equation} \lb{3.9}
\lim_{r\uparrow 1} f_+ (re^{i\theta}) = f(e^{i\theta}) -\lim_{r\uparrow 1}\, F(re^{i\theta})
\end{equation}
and if
\begin{equation} \lb{3.10}
f_-(z) = -\sum_{n=-\infty}^{-1} b_n z^n
\end{equation}
then, for a.e.\ $\theta\in(\alpha+\veps,\beta-\veps)$,
\begin{equation} \lb{3.11}
\lim_{r\downarrow 1}\, f_-(z) =-\lim_{r\downarrow 1}\, F(re^{i\theta})
\end{equation}

It follows that \eqref{3.5} holds, so $f_+$ has a classical analytic continuation across $(\alpha+\veps,
\beta-\veps)$. By Theorem~\ref{T1.3}, every right limit of $c_n$ is reflectionless on $(\alpha+\veps, \beta-\veps)$.

But, by \eqref{3.7}, the right limits of $c_n$ and $a_n$ are the same! Thus, each right limit of $a_n$ is reflectionless
on each $(\alpha+\veps,\beta-\veps)$, and so on $(\alpha,\beta)$.
\end{proof}

\begin{theorem}\lb{T3.1} If the hypotheses of Theorem~\ref{T2.3} hold, then $\sum_{n=0}^\infty a_n z^n$ has a
strong natural boundary on $\partial\bbD$.
\end{theorem}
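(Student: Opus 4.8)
The plan is to deduce this from Corollary~\ref{C1.5} (which is itself immediate from Theorem~\ref{T1.4}): recall that $f$ has a strong natural boundary as soon as, for \emph{every} open interval $I\subset\partial\bbD$, some right limit of $\{a_n\}$ fails to be reflectionless on $I$. So the entire task is to produce, for an arbitrary open interval $I\subset\partial\bbD$, a right limit of $\{a_n\}$ that is not reflectionless on $I$. The key observation is that the obstruction we produce will be essentially the same for every $I$ --- this $I$-independence of the non-reflectionless witness is exactly what converts the ordinary natural boundary of Theorem~\ref{T2.3} into a strong one.

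First I would reduce to one of the two index-range alternatives in Theorem~\ref{T2.3}; say there is $k>0$ with $b_j=c_j$ for all $j\geq k$ (the case $k<0$, where $b_j=c_j$ for $j\leq k$, is handled by the mirror-image argument, using the translate $\ti b_n:=b_{n+k+1}$, $\ti c_n:=c_{n+k+1}$, so that now the associated $f_-$'s agree and the $f_+$'s differ). In the case $k>0$ I would pass to the translated sequences $\ti b_n:=b_{n+k}$ and $\ti c_n:=c_{n+k}$. A one-line check shows a translate of a right limit is again a right limit: if $b_n=\lim_j a_{n+n_j}$ then $\ti b_n=\lim_j a_{n+(n_j+k)}$ with $n_j+k\to\infty$. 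By construction $\ti b_n=\ti c_n$ for all $n\geq 0$, so the two associated functions $f_+$ in \eqref{1.6} coincide as power series on $\bbD$, whereas $\ti b_{-k}=b_0\neq c_0=\ti c_{-k}$, so the two coefficient sequences on $\{n\leq -1\}$ differ.

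Now I would argue by contradiction: suppose both $\ti b$ and $\ti c$ were reflectionless on $I$. Then the common function $f_+$ continues analytically from $\bbD$ to $\Omega_I=\bbC\cup\{\infty\}\setminus(\partial\bbD\setminus I)$, and since $\Omega_I$ is connected the two continuations must coincide throughout $\Omega_I$; in particular they agree on $\bbC\setminus\ol{\bbD}$, where reflectionlessness gives $f_-^{(\ti b)}=-f_+=f_-^{(\ti c)}$. But $f_-^{(\ti b)}$ and $f_-^{(\ti c)}$ are the sums on $\bbC\setminus\ol{\bbD}$ of the Laurent series $\sum_{n\leq -1}\ti b_nz^n$ and $\sum_{n\leq -1}\ti c_nz^n$, so uniqueness of Laurent expansions forces $\ti b_n=\ti c_n$ for all $n\leq -1$, contradicting $\ti b_{-k}\neq\ti c_{-k}$. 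Hence at least one of $\ti b,\ti c$ is not reflectionless on $I$; since $I$ was arbitrary, Corollary~\ref{C1.5} applies.

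I do not expect any genuinely hard step here once Theorems~\ref{T1.3} and~\ref{T1.4} are available: the argument is really just the remark following Theorem~\ref{T2.3} read uniformly in $I$. The only points needing a little care are the bookkeeping for the two symmetric index-range cases (choosing the right translate so that one half-line of coefficients is matched and the disagreement lands on the other half) and invoking connectedness of $\Omega_I$ to identify the two analytic continuations of $f_+$.
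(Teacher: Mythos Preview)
Your proposal is correct and is exactly the argument the paper has in mind: the paper does not write out a proof of Theorem~\ref{T3.1} at all, treating it as immediate from Theorem~\ref{T1.4} (equivalently Corollary~\ref{C1.5}) together with the translation trick already spelled out in Remark~1 after Theorem~\ref{T2.3}. Your write-up simply makes explicit the two symmetric index-range cases and the connectedness of $\Omega_I$ that forces the two analytic continuations of the shared $f_+$ (or $f_-$) to coincide.
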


\begin{example} \lb{E3.2} The Rudin--Shapiro \cite{Rudin,Shap} sequence is defined by defining polynomials $P_n$ and
$Q_n$ recursively by $P_0(z)=Q_0(z)=1$, $P_{n+1}(z) = P_n(z)+z^{2^n} Q_n(z)$, $Q_{n+1}(z)=P_n(z)- z^{2^n} Q_n(z)$. As
power series with bounded coefficients, $\lim\, P_n(z)$ exists and defines a series $f(z)=\sum_{n=0}^\infty a_n z^n$,
where each $a_n$ is $=1$ or $-1$. By using Szeg\H{o}'s theorem, Brillhart \cite{Brill} proved this function had a natural
boundary. Here is an elementary direct proof. $P_{n+1}$ is $P_{n-1} Q_{n-1} P_{n-1} (-Q_{n-1})$. Taking right limits at
the end of the $P_{n-1}$ in $P_{n-1} Q_{n-1}$ and in $P_{n-1}(-Q_{n-1})$ yields right limits which agree at negative
index but have opposite signs at positive index.
\qed
\end{example}

%%%%%%%%%%%%%%%%%%%%%%%%%%%%%%%
\section{Gap Theorems} \lb{s4}
%%%%%%%%%%%%%%%%%%%%%%%%%%%%%%%

In this section, we'll prove Theorem~\ref{T1.6} and resolve the apparent contradiction to P\'olya \cite{Polya}--Erd\"os
\cite{Erdos}. The following proof shows the power of reflectionless theorems.

\begin{proof}[Proof of Theorem~\ref{T1.6}] By compactness, we can find a subsubsequence, $n_{j_\ell}$, so $\lim_{\ell\to\infty}
a_{n_{j_\ell}+k}=b_k$ exists for all $k$ and $b_0\neq 0$, $b_k=0$ for $k<0$. For such a right limit, $f_-(z)=0$, but $f_+(0)
=b_0\neq 0$. Thus, $f_-$ cannot be an analytic continuation of $f_+$ through any $I$, and this $\{b_n\}_{n=-\infty}^\infty$
is not reflectionless on any $I$. Theorem~\ref{T1.4} completes the proof.
\end{proof}

It is simplest to resolve the apparent contradiction with \cite{Polya,Erdos} in the context of an example.

\begin{example}\lb{E4.1} Let $U=\cup_{j=2}^\infty \{n\mid j!\leq n\leq j! + j\}$. We want to consider bounded power series
with $a_n=0$ if $n\in U$\!. If $a_{j!+j+1}=1$ for $j\geq 2$ and $a_n$ is arbitrary but bounded for $n\notin U\cup
\{j!+j+1\}_{j=2}^\infty$, then $f(z)$ has a strong natural boundary on $\partial\bbD$. The zero values may be only on
$U$\!, which is a set with zero density.

On the other hand, \cite{Polya,Erdos} say that since $U$ does not have density $1$, there must be $\{a_n\}_{n=1}^\infty$
with $a_n=0$ for $n\in U$ so that $\partial\bbD$ is not a natural boundary.

The resolution is that our natural boundary examples have hard edges, that is, $a_n$ jumps at the edges of $U$\!, while
the examples of \cite{Erdos} have soft edges. $\bbZ_+\setminus U$ has longer and longer nonzero intervals and Erd\"os'
examples ramp up slowly and down slowly to be $1$ in the center of these intervals. It is easy to see that these examples
have right limits which are constant, and so reflectionless!

These examples of Erd\"os are reminiscent of the sparse potentials of Molchanov \cite{Mol} and Remling \cite{Rem}, where
approximate solitons are placed in between long gaps. In fact, given the chronology, we should say the examples of \cite{Mol,Rem}
are reminiscent of Erd\"os \cite{Erdos}!
\qed
\end{example}

As noted by Agmon \cite{Agmon1, Agmon}, it isn't important that $a_{N_j+n}\to 0$ as $j\to 0$ from gaps, only that its $\limsup$
decays exponentially fast. Using right limits, this is easy to see since

\begin{theorem}\lb{T4.2} Let $\{b_n\}_{n=-\infty}^\infty$ be a two-sided series which is reflectionless on some interval,
$I\subset\partial\bbD$. Suppose that for some $C,D>0$, we have that
\begin{equation} \lb{4.1}
\abs{b_n} \leq Ce^{-Dn} \qquad\text{for } n>0
\end{equation}
Then $b_n\equiv 0$.
\end{theorem}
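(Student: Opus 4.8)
The plan is to exploit the definition of reflectionless directly: the hypothesis \eqref{4.1} says the positive-index tail of $\{b_n\}$ decays exponentially, so $f_+(z)=\sum_{n=0}^\infty b_n z^n$ extends analytically well beyond $\bbD$, into a disk $\{|z|<e^D\}$ (or at least $\{|z|<e^{D'}\}$ for any $D'<D$). Meanwhile $f_-(z)=\sum_{n=-\infty}^{-1} b_n z^n$ is analytic on $\bbC\cup\{\infty\}\setminus\ol{\bbD}$ and vanishes at $\infty$. Reflectionlessness on $I$ forces, by definition, that $f_+$ continues across $\partial\bbD\setminus I$ in such a way that $f_++f_-=0$ on $\bbC\cup\{\infty\}\setminus\ol\bbD$; but since $f_+$ is already entire near $\ol\bbD$ (it is analytic on a disk strictly containing $\ol\bbD$), the identity $f_-=-f_+$ holds on a full annulus $1<|z|<e^{D'}$ and hence, by analytic continuation of the already-globally-defined $f_-$, on all of $\bbC\cup\{\infty\}\setminus\ol\bbD$. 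Thus $f_-$ extends to an entire function (it equals $-f_+$ there and $f_+$ is analytic on the disk), and since $f_-(\infty)=0$, Liouville's theorem gives $f_-\equiv 0$, i.e. $b_n=0$ for all $n<0$.

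Having killed the negative-index coefficients, I would then observe that $-f_+ = f_- \equiv 0$ on $\bbC\setminus\ol\bbD$, and since $f_+$ is analytic on a neighborhood of $\ol\bbD$ it must vanish identically there too by the identity theorem, giving $b_n=0$ for all $n\ge 0$ as well. Hence $b_n\equiv 0$, as claimed. Alternatively, and perhaps more cleanly, once we know $f_-\equiv 0$ we simply note that the function $G$ obtained by gluing $f_+$ on $\bbD$ to $f_-=0$ on $\bbC\setminus\ol\bbD$ across $I$ (this glued function is exactly the content of reflectionlessness) is entire, bounded near $\infty$ (it is $0$ there), and hence constant; being $0$ on a nonempty open set it is identically $0$, so $f_+\equiv 0$ and all $b_n$ vanish.

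The only point requiring a little care — and the step I would flag as the main (minor) obstacle — is justifying that the analytic continuation of $f_+$ provided by the reflectionless hypothesis \emph{agrees} with the power-series continuation coming from \eqref{4.1}. But this is immediate: both are analytic continuations of the same function $f_+$ from $\bbD$, so on any connected open set containing part of $\bbD$ where both are defined they coincide, and in particular the relation $f_+ + f_- = 0$ transported to the annulus $1<|z|<e^{D'}$ uses only the values of $f_+$ on that annulus, which are the unambiguous power-series values $\sum b_n z^n$. Everything else is a one-line application of Liouville's theorem or the identity theorem, so the proof is short.
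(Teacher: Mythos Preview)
Your proof is correct and follows essentially the same route as the paper's: use \eqref{4.1} to extend $f_+$ analytically past $\partial\bbD$, then combine this with the reflectionless relation $f_+=-f_-$ near $I$ to produce an entire function that vanishes at infinity, and finish with Liouville. The paper phrases it as ``$f_+$ is entire and $\to 0$ at $\infty$'' while you phrase it as ``$f_-$ extends to an entire function vanishing at $\infty$''; these are mirror images of the same argument. One small slip of the pen: you wrote that reflectionlessness makes $f_+$ ``continue across $\partial\bbD\setminus I$,'' but you mean across $I$ (equivalently, to $\bbC\cup\{\infty\}\setminus(\partial\bbD\setminus I)$); the rest of your argument shows you have the right picture.
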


\begin{proof} By \eqref{4.1}, $f_+(z)$ has an analytic continuation to the circle $\{z\mid\abs{z} <e^D\}$. Since $f_+ =
-f_-$ in a neighborhood of $I$, we conclude that $f_+$ defines an entire function. Since $\abs{f_-(z)}\to 0$ as $\abs{z}\to
\infty$, $f_+\equiv 0$, so $f_-\equiv 0$ also, and then $b_n\equiv 0$.
\end{proof}

This immediately implies the following extension of Theorem~\ref{T1.6}:

\begin{theorem} \lb{T4.3} Suppose $\{a_n\}_{n=0}^\infty$ obeys \eqref{1.4} and there exists $n_j\to\infty$ so that for
some $C,D>0$ and for all $k<0$,
\begin{gather}
\limsup_{j\to\infty}\, \abs{a_{n_j+k}} \leq Ce^{-D\abs{k}} \lb{4.2} \\
\liminf \, \abs{a_{n_j}} >0 \lb{4.3}
\end{gather}
Then $f(z)=\sum_{n=0}^\infty a_n z^n$ has a strong natural boundary on $\partial\bbD$.
\end{theorem}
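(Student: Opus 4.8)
The plan is to deduce Theorem~\ref{T4.3} from Theorem~\ref{T4.2} (in its mirror-image form) together with a compactness argument and Corollary~\ref{C1.5}, in close parallel to the proof of Theorem~\ref{T1.6}. The point is that the hypotheses \eqref{4.2}--\eqref{4.3} produce, along a suitable subsequence, a single right limit $\{b_n\}$ of $\{a_n\}$ which is reflectionless on \emph{no} open interval of $\partial\bbD$ — more than enough to feed into Corollary~\ref{C1.5}.

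First I would extract the right limit. By \eqref{1.4} and a diagonal argument, pass to a subsubsequence $m_\ell$ of $n_j$ so that $b_k:=\lim_{\ell\to\infty} a_{m_\ell+k}$ exists for every $k\in\bbZ$; then $\{b_n\}_{n=-\infty}^\infty$ is a right limit of $\{a_n\}$. Passing to the limit in \eqref{4.3} gives $\abs{b_0}=\lim_\ell\abs{a_{m_\ell}}\geq\liminf_j\abs{a_{n_j}}>0$, and passing to the limit in \eqref{4.2} gives $\abs{b_k}=\lim_\ell\abs{a_{m_\ell+k}}\leq\limsup_j\abs{a_{n_j+k}}\leq Ce^{-D\abs{k}}$ for every $k<0$. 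So $\{b_n\}$ has $b_0\neq 0$ and a tail that decays exponentially as $k\to-\infty$.

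Next I would show that this $\{b_n\}$ is reflectionless on no open interval $I\subset\partial\bbD$, by running the argument of Theorem~\ref{T4.2} at the other end of the sequence. Concretely, the bound $\abs{b_k}\leq Ce^{-D\abs{k}}$ for $k<0$ shows that $f_-(z)=\sum_{n=-\infty}^{-1}b_nz^n$ extends analytically to $\{z\mid\abs{z}>e^{-D}\}\cup\{\infty\}$, with $f_-(\infty)=0$. If $\{b_n\}$ were reflectionless on some $I$, then $f_+=-f_-$ on $\bbC\cup\{\infty\}\setminus\ol{\bbD}$ and $f_+$ would continue analytically across $I$; since $f_+$ (on $\bbD$) and $-f_-$ (on $\{\abs{z}>e^{-D}\}$) are then both analytic on the annulus $\{e^{-D}<\abs{z}<1\}$ and agree on an open subset of it near $I$, the identity theorem forces them to agree on the whole annulus, so they glue to a single function analytic on all of $\bbC\cup\{\infty\}$, hence constant, and the constant equals $f_-(\infty)=0$. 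That contradicts $b_0=f_+(0)\neq0$. (Equivalently, one applies Theorem~\ref{T4.2} after using the end-reversing homeomorphism of Theorem~\ref{T2.2}, or just observes that the proof of Theorem~\ref{T4.2} is symmetric under $n\mapsto-n$.)

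Finally, since $\{b_n\}$ is a right limit of $\{a_n\}$ reflectionless on no interval, in particular for \emph{every} open interval $I\subset\partial\bbD$ there is a right limit of $\{a_n\}$ not reflectionless on $I$, so Corollary~\ref{C1.5} (which rests on Theorem~\ref{T1.4}) gives that $f(z)=\sum_{n=0}^\infty a_nz^n$ has a strong natural boundary on $\partial\bbD$. The only mildly delicate point is bookkeeping around the direction of decay: \eqref{4.2} controls the coefficients as $k\to-\infty$, while Theorem~\ref{T4.2} is phrased for decay at $+\infty$, so one must invoke the correct mirror version; everything else is routine.
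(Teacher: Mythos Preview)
Your proof is correct and follows essentially the same approach as the paper's: extract a right limit $\{b_n\}$ with $b_0\neq 0$ and $\abs{b_k}\leq Ce^{-D\abs{k}}$ for $k<0$, argue it cannot be reflectionless on any interval, and then invoke Corollary~\ref{C1.5}/Theorem~\ref{T1.4}. The paper handles the ``mirror'' issue by noting that if $\{b_n\}$ is reflectionless on $I$ then $\{b_{-n}\}$ is reflectionless on $\bar I$ and applying Theorem~\ref{T4.2} verbatim, whereas you rerun the entire-function argument directly on the $f_-$ side; these are equivalent, and you explicitly note the equivalence. One small remark on exposition: when you say $f_+$ and $-f_-$ ``agree on an open subset of the annulus near $I$,'' the agreement you actually know from the reflectionless condition is on $\bbC\setminus\ol{\bbD}$; the agreement on the annulus then follows because the intersection of the two domains of analyticity, namely $\Omega_I\cap\{\abs{z}>e^{-D}\}$, is connected (one passes through $I$). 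It would be cleaner to say this explicitly rather than asserting agreement inside the annulus at the outset.
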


\begin{proof} By compactness, there exists a right limit, so \eqref{4.1} holds for $n<0$ and $b_0\neq 0$. If
$\{b_n\}_{n=-\infty}^\infty$ is reflectionless on $I$, then $\{b_{-n}\}_{n=-\infty}^\infty$ is reflectionless on
$\bar I=\{z\mid\bar z\in I\}$, so Theorem~\ref{T4.2} implies $b_n\equiv 0$ if this right limit is reflectionless.
But $b_0\neq 0$.
\end{proof}

%%%%%%%%%%%%%%%%%%%%%%%%%%%%%%%%%%%%%%%%%%%
\section{Szeg\H{o}'s Theorem} \lb{s5}
%%%%%%%%%%%%%%%%%%%%%%%%%%%%%%%%%%%%%%%%%%%

In this section, we'll prove

\begin{theorem}\lb{T5.1} Let $\sum_{n=0}^\infty a_n z^n$ where the values of $\{a_n\}$ lie in a finite set, $F$\!. Then
either $f(z)=\sum_{n=0}^\infty a_n z^n$ has a strong natural boundary or $a_n$ is eventually periodic, in which case
$f$ is a rational function with poles at roots of unity.
\end{theorem}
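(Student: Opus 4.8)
\textbf{Proof proposal for Theorem~\ref{T5.1}.}
The plan is to use the dichotomy supplied by Theorem~\ref{T2.3} (in its strong form, Theorem~\ref{T3.1}): if $f$ does \emph{not} have a strong natural boundary, then on some arc $I\subset\partial\bbD$ the bound \eqref{1.7} holds, hence by Theorem~\ref{T1.4} every right limit of $\{a_n\}$ is reflectionless on $I$, and the first half of a reflectionless two-sided sequence determines the second half (Theorem~\ref{T2.2}). The strategy is to combine this with the finiteness of the value set $F$ to force the sequence $\{a_n\}$ to be eventually periodic, and then to read off that $f$ is rational.

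The key steps, in order, are as follows. First, assume $f$ does not have a strong natural boundary, so fix an arc $I$ on which \eqref{1.7} holds and let $\calR$ be the set of right limits of $\{a_n\}$; by Theorem~\ref{T1.4} each element of $\calR$ is reflectionless on $I$, and by Theorem~\ref{T2.2} the map sending $\{b_n\}_{n\le -1}$ to $\{b_n\}_{n\ge 0}$ is well-defined and continuous on $\calR$. Second, observe that since all $a_n\in F$, every right limit is an $F$-valued two-sided sequence, and $\calR$ is a \emph{shift-invariant} closed subset of $F^{\bbZ}$ (closedness is immediate from a diagonal argument; shift-invariance is because a shift of a right limit is again a right limit). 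Third — this is the crucial combinatorial step — I claim $\calR$ is finite, in fact a single periodic orbit. The reason: the ``past determines future'' property means that the value $b_0$ of any $b\in\calR$ is a function of the one-sided past $\{b_n\}_{n\le -1}$; but I want to upgrade this to the statement that $b_0$ depends only on a \emph{bounded} block of the past. This follows from a compactness/uniform-continuity argument: the homeomorphism of Theorem~\ref{T2.2} between $\{b_n\}_{n\le -1}$ and $\{b_n\}_{n\ge 0}$ is continuous on a compact space, hence uniformly continuous, so there is a finite $L$ such that $b_0$ is determined by $(b_{-L},\dots,b_{-1})$ for all $b\in\calR$. Since $F$ is finite there are only finitely many such blocks, so $\calR$ has finitely many elements, each of which (being in a finite shift-invariant set) is periodic. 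Fourth, translate this back to the original sequence: if $\{a_n\}$ had infinitely many distinct ``tails'' it would produce, via a diagonal extraction, a non-periodic right limit — more precisely, because every right limit is eventually (indeed purely) periodic with period dividing some fixed $p=\operatorname{lcm}$ of the finitely many periods in $\calR$, the sequence $a_{n+p}-a_n$ must tend to $0$; but it is $F'$-valued for the finite difference set $F'=F-F$, so it is eventually $0$, i.e. $\{a_n\}$ is eventually periodic. Finally, if $a_n$ is eventually periodic of period $p$, write $f(z)=P(z)+\sum_{j=0}^{p-1}a_{N+j}\sum_{m\ge 0} z^{N+j+mp}$ for a polynomial $P$; summing the geometric series gives $f(z)=P(z)+Q(z)/(1-z^p)$ with $Q$ a polynomial, so $f$ is rational with poles only among the $p$-th roots of unity.

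The main obstacle I anticipate is the third step: promoting the abstract ``continuous dependence of $\{b_n\}_{n\ge0}$ on $\{b_n\}_{n\le-1}$'' from Theorem~\ref{T2.2} to the concrete ``$b_0$ depends on only finitely many past coordinates,'' and in particular making sure this $L$ is uniform over all of $\calR$. Uniform continuity on the compact set $\calR$ gives exactly this once one observes that, in $F^{\bbZ}$ with $F$ finite, a basic neighborhood of a point is specified by fixing finitely many coordinates; so the $\delta$ from uniform continuity translates into a finite window $L$, and the value at coordinate $0$ on the ``future'' side is locally constant in a way controlled by $L$ coordinates on the ``past'' side. Once that is in hand, the finiteness of $F$ does the rest, and the passage from ``every right limit is periodic with bounded period'' to ``$\{a_n\}$ is eventually periodic'' is the standard fact that a bounded integer-difference sequence tending to $0$ is eventually $0$. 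I would also remark that this recovers Szeg\H{o}'s theorem, and that the only place analyticity enters is through Theorem~\ref{T1.4}; everything after that is soft.
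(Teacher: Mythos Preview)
Your proposal is correct but follows a different route from the paper. The paper argues by contraposition via a direct combinatorial construction (borrowed from Boas): assuming $\{a_n\}$ is not eventually periodic, for each $p$ the pigeonhole principle yields two positions $P_p<Q_p$ at which the length-$p$ blocks agree; since periodicity fails, there is a first index $L_p>p$ where $a_{P_p+L_p}\neq a_{Q_p+L_p}$. Shifting to those positions and passing to a limit as $p\to\infty$ produces two right limits that agree for all negative indices but differ at index $0$, so Theorem~\ref{T3.1} gives the strong natural boundary directly. Your argument instead assumes no strong natural boundary, invokes Theorem~\ref{T1.4} and the continuous determinism of Theorem~\ref{T2.2}, and uses finiteness of $F$ (hence discreteness of the target of $b\mapsto b_0$) to promote continuous dependence on the past to dependence on a finite window; this makes $\calR$ a finite shift-invariant set of periodic points, and then $a_{n+p}-a_n\to 0$ in the finite set $F-F$ forces eventual periodicity. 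The paper's approach is shorter and uses only Theorem~\ref{T3.1}, not the finer continuity statement of Theorem~\ref{T2.2}; yours is the ``Kotani-style'' argument the paper explicitly mentions (and declines to use) in the remark following the theorem, and it makes more transparent the structural reason behind the dichotomy---reflectionless plus finite alphabet forces a finite subshift. Both are valid; the paper's choice has the side benefit that the same block argument suggests an alternative proof of the analogous spectral-theory results of Kotani and Remling.
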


\begin{remark} This result for ordinary natural boundaries is due to Szeg\H{o} \cite{Sz1922}. That $f$ is unbounded on any
sector is due to Duffin--Schaeffer \cite{DuSc}, and that there is a strong natural boundary is a result of Boas \cite{Boas}.
This is an analogy of spectral theory results of Kotani \cite{Kot89} and Remling \cite{Rem}. One could use an argument of Kotani
and our reflectionless machinery to prove Theorem~\ref{T5.1}, but instead we'll borrow part of Boas' argument and note
one could use that to find an alternate proof of the spectral theory results.
\end{remark}

\begin{proof} Let $V$ be the finite set of possible values of $a_n$. Suppose $\{a_n\}$ is not eventually periodic.
Fix $p=1,2,\dots$ and consider $p$ blocks, $\{a_j\}_{j=\ell p+1}^{(\ell +1)p}$ for $\ell=0,1,2,\dots$. Since there are
only $(\#V)^p$ possible $p$ blocks, some value must recur, that is, there exist $Q_p >P_p$ so that
\begin{equation}\lb{5.1}
a_{Q_p+j} = a_{P_p+j} \qquad j=1, \dots, p
\end{equation}
If \eqref{5.1} holds for all $j\geq 1$, then for $k=P_p + j\geq P_p+1$, we have
\[
a_{(Q_p-P_p)+k} = a_k \qquad k=1, \dots
\]
that is, $a$ is eventually periodic. Since we are assuming the contrary, there is $L_p\geq p+1$, so $a_{Q_p + L_p} \neq
a_{P_p+L_p}$.

Let $N_p=P_p + L_p\to\infty$ as $p\to\infty$ since $L_p\geq p$ and $M_p =Q_p + L_p >N_p+L$. Then
\begin{align}
a_{N_p}+j &= a_{M_p}+j \qquad j=-p, \dots, -1 \lb{5.2} \\
a_{N_p} &\neq a_{M_p} \lb{5.3}
\end{align}
By compactness, we get right limits, $b,c$, obeying \eqref{2.15}/\eqref{2.14x}, so by Theorem~\ref{T3.1}, $\sum_{n=0}^\infty
a_n z^n$ has a strong natural boundary.
\end{proof}

We note the following extension of Szeg\H{o}'s theorem which appears in Bieberbach \cite{Bie} (who only proved classical natural boundary):

\begin{theorem} \lb{T5.1} Let $\{a_n\}_{n=0}^\infty$ be a bounded sequence with finitely many limit points. Then either
$\sum_{n=0}^\infty a_n z^n$ has a strong natural boundary or there is a periodic sequence, $c_n$, with
\begin{equation} \lb{5.4}
\abs{a_n-c_n} \to 0 \qquad\text{as } n\to\infty
\end{equation}
\end{theorem}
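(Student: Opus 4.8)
\emph{Proof proposal.} The plan is to reduce to the finite-valued case (the Szeg\H{o}--Boas theorem proved just above), by discretizing the coefficients, and then to push the conclusion through \emph{right limits} rather than through the function itself. The reason the argument cannot simply quote the finite-valued theorem as a black box is that the discretization changes the function $f$ (the difference of the two power series need not continue anywhere), but it does \emph{not} change the set of right limits; and the finite-valued theorem, when the series is not eventually periodic, really produces a pair of right limits that agree on a half-line but differ at index $0$, a statement visible to Theorem~\ref{T2.3}/\ref{T3.1} and insensitive to the discretization.

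First I would set up the discretization. Let $V\subset\bbC$ be the set of limit points of $\{a_n\}$; by hypothesis $V=\{v_1,\dots,v_m\}$ is finite, and it is nonempty by \eqref{1.4}. A standard subsequence argument (any subsequence staying a fixed distance from $V$ has a convergent subsubsequence, whose limit would be a limit point outside $V$) shows $\dist(a_n,V)\to 0$. Put $\delta=\tfrac13\min_{i\neq j}\abs{v_i-v_j}$ (and $\delta=1$ if $m=1$). For all $n\geq N$ with $N$ large enough that $\dist(a_n,V)<\delta$, there is a unique index with $\abs{a_n-v_i}<\delta$; define $\ti a_n=v_i$. Then $\ti a_n$ takes values in the finite set $V$, $\abs{a_n-\ti a_n}\to 0$, and consequently, for every $k\in\bbZ$ and every $n_j\to\infty$, $a_{n_j+k}-\ti a_{n_j+k}\to 0$; so $\{a_n\}$ and $\{\ti a_n\}$ have exactly the same right limits.

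Now I would split on whether $\ti a$ is eventually periodic. If $\ti a_{n+q}=\ti a_n$ for all $n\geq N'$ and some period $q\geq 1$, define $c_n$ (for $n\geq 0$) to be the genuinely $q$-periodic sequence with $c_n=\ti a_n$ for $N'\leq n<N'+q$; then $c$ is periodic and $\abs{a_n-c_n}=\abs{a_n-\ti a_n}\to 0$, which is the second alternative of the theorem. If instead $\ti a$ is not eventually periodic, I would run the block-pigeonhole argument from the proof of the finite-valued Szeg\H{o} theorem above, applied to the finite-valued tail of $\ti a$: for each $p$ there are $N_p<M_p$ with $N_p\to\infty$ such that $\ti a_{N_p+j}=\ti a_{M_p+j}$ for $j=-p,\dots,-1$ while $\ti a_{N_p}\neq\ti a_{M_p}$. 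Using $\abs{a_n-\ti a_n}\to 0$, for each fixed $j<0$ we get $a_{N_p+j}-a_{M_p+j}\to 0$ as $p\to\infty$, whereas $a_{N_p}-a_{M_p}=(\ti a_{N_p}-\ti a_{M_p})+o(1)$ stays bounded away from $0$ since $\ti a_{N_p}-\ti a_{M_p}$ ranges over the finite set $\{v_i-v_j:i\neq j\}$ of nonzero numbers. Passing to a subsequence of $p$'s along which $a_{N_p+k}$ and $a_{M_p+k}$ converge for every $k\in\bbZ$ (diagonal argument, using \eqref{1.4}), I obtain two right limits $b,c$ of $\{a_n\}$ with $b_j=c_j$ for all $j\leq -1$ and $b_0\neq c_0$. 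These meet the hypotheses of Theorem~\ref{T2.3} with $k=-1$, so Theorem~\ref{T3.1} yields that $\sum_{n=0}^\infty a_n z^n$ has a strong natural boundary.

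The main obstacle is conceptual rather than computational: one must resist the temptation to apply the finite-valued theorem directly to $\ti a$ and instead record its output as a statement about right limits, since only in that form does it transfer from $\ti a$ to $a$. Once that is recognized, the remaining ingredients — verifying $\dist(a_n,V)\to 0$, defining $\ti a$ using the separation of $V$, transferring the block matches to asymptotic statements about $a$, and arranging the periodic approximant to be genuinely (not merely eventually) periodic — are all routine.
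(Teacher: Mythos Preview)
Your proposal is correct and follows essentially the same route as the paper: discretize $a_n$ to a finite-valued sequence taking values in the set $V$ of limit points, observe that $a_n$ and the discretized sequence share the same right limits, and split on whether the discretized sequence is eventually periodic. The paper's proof is simply more compressed, writing ``by the above, it has a right limit which is not reflectionless'' to refer back to the block-pigeonhole argument rather than re-running it; your version spells out that step and the passage to two right limits satisfying the hypotheses of Theorem~\ref{T2.3}, but the substance is the same.
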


\begin{proof} Let $V$ be the finite set of limit points. Let $\gamma=\f12 \min_{x,y\in V,\, x\neq y} \abs{x-y}$.
Eventually, for all $n$, there is $c_n\in V$ with $\abs{a_n-c_n}\leq \gamma$. It follows that \eqref{5.4} holds. If $c_n$
is eventually periodic, it can be modified for small $n$ to be periodic and \eqref{5.4} still holds.

If $c_n$ is not eventually periodic, by the above, it has a right limit which is not reflectionless. But $a_n$ and $c_n$
have the same right limits.
\end{proof}

%%%%%%%%%%%%%%%%%%%%%%%%%%%%%%%%%%%%%%%
\section{Random Power Series} \lb{s6}
%%%%%%%%%%%%%%%%%%%%%%%%%%%%%%%%%%%%%%%

In this section, we'll prove Theorem~\ref{T1.7} as well as

\begin{theorem}\lb{T6.1} Let $\{a_n(\omega)\}_{n=0}^\infty$ be a sequence of independent random variables so that
\begin{SL}
\item[{\rm{(i)}}] $\sup_{n,\omega} \abs{a_n(\omega)} =K<\infty$
\item[{\rm{(ii)}}] For some sequence $n_j\to\infty$,
\begin{equation} \lb{6.1}
\limsup_{j\to\infty}\, [\bbE(\abs{a_{n_j}(\omega)}^2) - \abs{\bbE(a_{n_j}(\omega))}^2] >0
\end{equation}
Then for a.e.\ $\omega$, $\sum_{n=0}^\infty a_n(\omega) z^n$ has a strong natural boundary.
\end{SL}
\end{theorem}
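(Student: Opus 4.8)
The goal is to show that almost surely the random series has a strong natural boundary, and the mechanism should be Corollary~\ref{C1.5} (via Theorem~\ref{T1.4}): I want to produce, for each open arc $I\subset\partial\bbD$, a right limit of $\{a_n(\omega)\}$ that is not reflectionless on $I$. Since the arcs $I$ with rational endpoints are countable and a strong natural boundary on each of these implies one on all of $\partial\bbD$ (any arc contains a rational one), it suffices to handle a fixed arc and then intersect countably many full-measure events. So fix an arc $I$; I must show that for a.e.\ $\omega$ some right limit is non-reflectionless on $I$.

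First I would extract along the subsequence $n_j$ from hypothesis (ii) a further subsequence $n_{j_\ell}$ for which $\liminf_\ell[\bbE(\abs{a_{n_{j_\ell}}}^2)-\abs{\bbE(a_{n_{j_\ell}})}^2]=:c>0$; passing to this subsequence I keep the uniform positivity of the variance. The independence of $\{a_n(\omega)\}$ is the crucial hypothesis: I want to use a second Borel--Cantelli / concentration argument to show that, almost surely, among the blocks $\{a_{n_{j_\ell}+k}(\omega)\}_{k}$ one finds two indices, say $n_{j_\ell}$ and $n_{j_{\ell'}}$ with $\ell<\ell'$, whose coefficients \emph{agree} for all $k<0$ in the limit (after a diagonal subsequence this is automatic: any two right limits built from a common subsubsequence on which all translates converge will have coinciding tails on one side, but that is too weak) — more precisely I want two right limits $b,c$ with $b_k=c_k$ for all $k<0$ but $b_0\ne c_0$. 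That would put us exactly in the setting of Theorem~\ref{T2.3}/Theorem~\ref{T3.1} and finish the proof.

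The honest route is: by compactness pick a diagonal subsequence of $\{n_{j_\ell}\}$ along which $a_{n_{j_\ell}+k}(\omega)\to b_k(\omega)$ for every $k\in\bbZ$, for a.e.\ $\omega$ (using separability of the value space and a further countable diagonalization — one subsequence working for a.e.\ $\omega$ needs care, so instead I fix a deterministic subsequence and argue the limit exists a.s.\ by a martingale or subsequence-of-subsequence argument along a countable dense set of coefficients). For the \emph{negative} indices I also want the limit to exist along a \emph{second}, independent copy of the same construction shifted far to the right, so that the two right limits share their negative tails. The cleanest implementation: use independence to show that for each fixed $m<0,\dots,-1$ the differences $a_{n_{j_\ell}+k}-a_{n_{j_{\ell+1}}+k}$ concentrate, while the variance lower bound \eqref{6.1} forces $\mathbb{P}(|a_{n_{j_\ell}+0}-\mathbb{E}(a_{n_{j_\ell}})|\ge\delta)\ge\eta$ for fixed $\delta,\eta>0$ and all $\ell$; then a Borel--Cantelli argument over the independent blocks (the blocks at widely separated indices are independent) gives, a.s., infinitely many $\ell$ with $a_{n_{j_\ell}}(\omega)$ far from its mean, and one can pass to a subsequence along which the full two-sided limit exists \emph{and} the value at $0$ oscillates — yielding two distinct right limits agreeing off the origin. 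Alternatively, and more simply, one can invoke Theorem~\ref{T6.1}'s own proof once it is in place: since this is Theorem~\ref{T6.1}, I would in fact prove it directly and derive Theorem~\ref{T1.7} from it separately (nondeterminism plus ergodicity giving the variance bound via the $L^2$ prediction error being a.s.\ positive and constant).

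\textbf{Main obstacle.} The real difficulty is the measurability/uniformity of the subsequence: \eqref{6.1} gives a fixed deterministic subsequence $n_j$, but ``take a further subsequence along which all two-sided translates converge'' is an $\omega$-dependent operation, and I must ensure the resulting right limits are genuinely non-reflectionless on $I$ for a.e.\ $\omega$, not just for each $\omega$ a possibly different arc. I expect to handle this by fixing a countable dense family of arcs, and for each arc running the block-comparison argument: show $\mathbb{P}(\exists\text{ two right limits agreeing at negative indices, differing at }0)=1$ using independence of well-separated blocks (this makes the ``infinitely often'' event a tail event of an independent sequence, hence probability $0$ or $1$, and the variance bound \eqref{6.1} forces it to be positive, hence $1$). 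Once that event holds, Theorem~\ref{T3.1} applies verbatim and delivers the strong natural boundary; intersecting over the countable family of arcs completes the proof.
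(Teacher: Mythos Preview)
Your overall strategy is right---produce, almost surely, two right limits $b,c$ with $b_k=c_k$ for $k\neq 0$ and $b_0\neq c_0$, then invoke Theorem~\ref{T3.1}---and you correctly locate Borel--Cantelli on well-separated (hence independent) blocks as the engine. But there is a genuine gap at the point you yourself flag: you never say how to arrange that the values at indices $k\neq 0$ converge to the \emph{same} limits along two different subsequences. Your suggestion that the differences $a_{n_{j_\ell}+k}-a_{n_{j_{\ell+1}}+k}$ ``concentrate'' has no support (these are differences of independent bounded variables with no smallness built in), and the alternative of extracting an $\omega$-dependent diagonal subsequence runs straight into the measurability problem you identified. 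The variance bound handles only the single index $k=0$; it gives you two separated values there with uniformly positive probability, but says nothing about the flanking coefficients.

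The paper closes this gap by first constructing \emph{deterministic} targets for every coordinate, via a simple covering lemma (Lemma~\ref{L6.2}): for each $n$ and each tolerance $1/m$, pick $z_n^{(m)}$ so that $\bbP(|a_n-z_n^{(m)}|\le 1/m)\ge K_m>0$ (pigeonhole on a finite cover of $\{|z|\le K\}$); at the distinguished indices $n_j$ the variance bound additionally supplies two well-separated targets $w_j^{(m)},\zeta_j^{(m)}$ each hit with probability $\ge\wti K_m$. One then passes to a subsequence on which the \emph{deterministic} quantities $z_{n_j+k}^{(m)}, w_j^{(m)}, \zeta_j^{(m)}$ converge (compactness in $\bbC$, no $\omega$ involved). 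Only now does randomness enter: for each $m,Q$ the event ``$|a_{n_j+k}-z_{n_j+k}^{(m)}|\le 1/m$ for $0<|k|\le Q$ and $|a_{n_j}-w_j^{(m)}|\le 1/m$'' has probability at least $K_m^{2Q}\wti K_m>0$ by independence, and the events for different (sufficiently spaced) $j$ are independent, so Borel--Cantelli gives ``infinitely often'' a.s.; the same with $\zeta$ in place of $w$. This yields, for a.e.\ $\omega$, the two desired right limits with $b_k=c_k=z_k$ for $k\neq 0$ and $b_0=w\neq\zeta=c_0$. The point is that the targets are fixed \emph{before} you look at $\omega$, which dissolves the measurability issue entirely.

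One smaller remark: the reduction to a countable family of arcs is unnecessary here. Theorem~\ref{T3.1} (via Theorem~\ref{T2.3}) already delivers a strong natural boundary on all of $\partial\bbD$ from a single pair of right limits of this form; no arc-by-arc argument is needed.
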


\begin{remark} $\bbE$ is expectation. We'll use $\bbP$ for probability and $\bbV$ for variation, so \eqref{6.1} is
$\limsup_{j\to\infty} \bbV(a_{n_j}(\omega)) >0$.
\end{remark}

\begin{lemma}\lb{L6.2}
\begin{SL}
\item[{\rm{(i)}}] For any $K$ and $m$, there exists $K_m >0$ so that for any random variable $f$ with $\norm{f}_\infty \leq K$\!,
there exists $z\in\bbC$ so that
\begin{equation} \lb{6.2}
\bbP\biggl( \abs{f(\omega)-z}\leq \f{1}{m}\biggr) \geq K_m
\end{equation}

\item[{\rm{(ii)}}] For any $K$\!, $m$, and $\sigma >0$, there exists $\wti K_m >0$ so that for any random variable $f$ with
$\norm{f}_\infty \leq K$ and $\bbV(f)\geq\sigma$, there exists $z,w\in\bbC$ with
\begin{equation} \lb{6.3}
\abs{z-w} \geq\biggl( \f{\sigma}{2}\biggr)^{1/2}
\end{equation}
so that
\begin{equation} \lb{6.4}
\bbP\biggl(\abs{f(\omega)-z} \leq \f{1}{m}\biggr) \geq \wti K_m \qquad
\bbP\biggl( \abs{f(\omega)-w}\leq \f{1}{m}\biggr) \geq \wti K_m
\end{equation}
\end{SL}
\end{lemma}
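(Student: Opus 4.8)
The plan is to prove Lemma~\ref{L6.2} by a covering (pigeonhole) argument on the closed disk $\ol{D}(0,K)$, which contains the essential range of the bounded random variable $f$; for part (ii) I would additionally exploit the variance lower bound to force two heavily populated cover balls to lie far apart.

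For (i), first fix a finite cover of $\ol{D}(0,K)$ by $N_m$ closed disks $D_1,\dots,D_{N_m}$ of radius $1/m$, where $N_m$ depends only on $K$ and $m$. Since $\norm{f}_\infty\le K$, the events $\{f\in D_i\}$ cover a set of full measure, so $\sum_i\bbP(f\in D_i)\ge 1$ and some $D_{i_0}$ has $\bbP(f\in D_{i_0})\ge 1/N_m$; taking $z$ to be the center of $D_{i_0}$ and $K_m=1/N_m$ gives \eqref{6.2}.

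For (ii), I would work with a radius $r=\min\{1/m,\,\sqrt{\sigma}/4\}$ (so that $r\le 1/m$ and $r$ is also small relative to $\sqrt{\sigma}$) and cover $\ol{D}(0,K)$ by $N=N(K,m,\sigma)$ closed disks $D_1,\dots,D_N$ of radius $r$ with centers $z_1,\dots,z_N$. Call an index $i$ \emph{heavy} if $\bbP(f\in D_i)\ge\eta$, where $\eta:=\sigma/(16K^2N)$, and let $T$ be the union of the heavy disks. The light disks then carry total probability at most $N\eta=\sigma/(16K^2)$, so $\bbP(f\notin T)\le\sigma/(16K^2)$, i.e.\ $f$ is concentrated in $T$. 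The crucial step is a dichotomy for $\diam(T)$: writing $\bbV(f)=\f{1}{2}\bbE\abs{f(\omega)-f(\omega')}^2$ for two independent copies and splitting according to whether both copies land in $T$, one obtains
\begin{equation*}
\bbV(f)\le\f{1}{2}\diam(T)^2+4K^2\,\bbP(f\notin T)\le\f{1}{2}\diam(T)^2+\f{\sigma}{4},
\end{equation*}
so $\bbV(f)\ge\sigma$ forces $\diam(T)\ge\sqrt{3\sigma/2}$. Consequently two heavy disks $D_i,D_j$ realize this diameter up to their radius $r$, whence $\abs{z_i-z_j}\ge\sqrt{3\sigma/2}-2r\ge(\sigma/2)^{1/2}$ by the choice of $r$. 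Taking $z=z_i$, $w=z_j$, $\wti K_m=\eta$, and noting $D_i\subset\{\abs{f-z_i}\le 1/m\}$ since $r\le 1/m$, yields \eqref{6.3} and \eqref{6.4}.

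The main obstacle I anticipate is calibrating the heaviness threshold $\eta$ in part (ii): it must be small enough that the escaping mass $\bbP(f\notin T)$ contributes at most $\sigma/4$ to the variance, yet still be a fixed positive number depending only on $K,m,\sigma$ --- which works precisely because the number $N$ of cover disks is finite and controlled by those parameters. Once that balance is struck, pinning the separation constant down to exactly $(\sigma/2)^{1/2}$ is only a matter of taking $r$ small enough compared with $\sqrt{\sigma}$, and the stated choice $r\le\sqrt{\sigma}/4$ leaves room to spare; part (i) is pure pigeonhole and poses no difficulty.
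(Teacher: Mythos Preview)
Your proof of (i) is identical to the paper's. For (ii) your argument is correct but takes a genuinely different route. The paper proceeds sequentially: it first applies (i) to locate a single heavy center $z$, then uses the elementary Chebyshev-type bound
\[
\var(f)\le c^2 + (2\norm{f}_\infty)^2\,\bbP(\abs{f-\alpha}>c)
\]
with $\alpha=z$ and $c=(\sigma/2)^{1/2}$ to conclude that a fixed fraction of the mass (namely $\ge\sigma/(8K^2)$) lies outside the disk of radius $(\sigma/2)^{1/2}$ about $z$, and then applies (i) a second time to that escaping mass to produce $w$. Your approach is instead global: you classify all cover disks as heavy or light, bound the light mass, and then use the two-copy identity $\bbV(f)=\tfrac12\,\bbE\abs{f-f'}^2$ to force a large diameter for the heavy set, whence two far-apart heavy centers. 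The paper's argument is shorter and avoids the heavy/light bookkeeping and the calibration of $\eta$ that you flag as the main obstacle; on the other hand, your diameter argument yields the two centers simultaneously and symmetrically, and the independent-copy variance formula is a nice substitute for the Chebyshev step. Both give $\wti K_m$ depending on $K,m,\sigma$, as required.
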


\begin{proof} (i) Cover $\{z\mid\abs{z}\leq K\}$ by disks of radius $\f1m$. By compactness (or simple geometry), one can arrange
for a finite number $N_m$. One of these disks must have probability at least $K_m=(N_m)^{-1}$. \eqref{6.2} holds for the center
of that disk.

\smallskip
(ii) For any $c>0$ and bounded random variable $g$,
\begin{equation} \lb{6.5}
\bbE(\abs{g}^2) \leq c^2\bbP(\abs{g}\leq c) + \norm{g}_\infty^2 \bbP(\abs{g}\geq c)
\end{equation}
Thus, for any $\alpha$ with $\abs{\alpha}\leq\norm{f}_\infty$,
\begin{equation} \lb{6.6}
\var(f) \leq c^2 + (2\norm{f}_\infty)^2 \bbP(\abs{f-\alpha}>c)
\end{equation}
Picking $c\leq (\f{\sigma}{2})^{1/2}$ with $\sigma=\var(f)$, we see that
\begin{equation} \lb{6.7}
\bbP(\abs{f-\alpha}>c)\geq \f{1}{8\norm{f}_\infty^2} \var (f)
\end{equation}

Use (i) to find $z$ so $\bbP(\abs{f(\omega)-z}\leq \f{1}{m})\geq K_m$ where, if necessary, $m$ is increased so
$\f{1}{m} \leq (\f{\sigma}{2})^{1/2}$. By repeating (i) using $\bbP(\abs{f-z}\geq (\f{\sigma}{2})^{1/2}) \geq
\f{1}{8\norm{f}_\infty} \sigma$, we get a $w$ outside the disk $\{\zeta\mid\abs{\zeta-z}\leq(\f{\sigma}{2})^{1/2}\}$
so that $\bbP(\abs{f(\omega)-w}\leq\f{1}{m}) \geq \wti K_m$ for some $\wti K_m\leq K_m$.
\end{proof}

\begin{proof}[Proof of Theorem~\ref{T6.1}] Pick $\sigma >0$ and $n_j\to\infty$ so that $\bbV(a_{n_j}(\omega))\geq\sigma$
for all $j$ and so that $n_{j+1} \geq n_j$ and $n_{j+1}-n_j\to\infty$. For each $n\neq n_j$, for all $j$, use (i) of
Lemma~\ref{L6.2} to pick $z_n^{(m)}$ so that
\begin{equation} \lb{6.8}
\bbP\biggl(\abs{a_n(\omega)-z_n^{(m)}} \leq \f{1}{m}\biggr) \geq K_m
\end{equation}

By compactness and using the diagonalization trick, one can pass to a subsequence of the $n_j$, which we'll still
denote by $n_j$,
so for all $m$ and all $k\neq 0$, $z_{n_j+k}^{(m)}\to z_k$ as $j\to\infty$.

By using (ii) of Lemma~\ref{L6.2}, find $w_j^{(m)}, \zeta_j^{(m)}$ so that
\begin{equation} \lb{6.9}
\begin{aligned}
\bbP\biggl( \abs{a_{n_j}(\omega)-w_j^{(m)}} \leq \f{1}{m}\biggr) &\geq\wti K_m \\
\bbP\biggl( \abs{a_{n_j}(\omega)-\zeta_j^{(m)}} \leq \f{1}{m}\biggr) & \geq \wti K_m
\end{aligned}
\end{equation}
and $\abs{w_j^{(m)} - \zeta_j^{(m)}} \geq (\f{\sigma}{2})^{1/2}$. Again, by passing to a subsequence, we may assume
that $w_j^{(m)}\to w$, $\zeta_j^{(m)}\to \zeta$ where, of course, $\abs{\zeta-w} \geq (\f{\sigma}{2})^{1/2}$.

By the Borel--Cantelli lemma, for each $m$ and each $Q$,
\begin{equation} \lb{6.10}
\abs{a_{n_j}(\omega) - w_j^{(m)}} \leq \f{1}{m} \qquad \abs{a_{n_j+k}(\omega) - z_{n_j+k}^{(m)}} \leq \f{1}{m}
\end{equation}
for all $k$ with $0 < \abs{k}\leq Q$, occurs infinitely often for a.e.\ $\omega$, and the same for $\zeta_j^{(m)}$. Thus, for
a.e.\ $\omega$, the right limits include $b,c$ with
\begin{equation} \lb{6.11}
b_k=c_k=z_k \text{ for } k\neq 0 \qquad b_0 =w \qquad c_0 = \zeta\neq w
\end{equation}

It follows from Theorem~\ref{T3.1} that $a_n(\omega)$ has a strong natural boundary for all $\omega$.
\end{proof}

\begin{remark} The use of the Borel--Cantelli lemma to get nonreflectionless limits (in the context of spectral theory of
CMV matrices) is taken from work of Breuer--Ryckman--Zinchenko \cite{BRZ}.
\end{remark}

For our proof of Theorem~\ref{T1.7}, we need two lemmas whose proof we defer to the end of the section.

\begin{lemma}\lb{L6.3} \ Let $\alpha <\beta$ in $(-\pi,2\pi)$ with $\abs{\beta-\alpha}<2\pi$. Then $\{\{a_n\}_{n=0}^\infty
\mid \sup_n \abs{a_n}\leq A$ for all $\veps >0$; $\sup_{0<r<1} (\int_{\alpha+\veps}^{\beta-\veps} \abs{\sum_{n=0}^\infty
a_n r^n e^{in\theta}} \f{d\theta}{2\pi}) <\infty \}$ is a measurable set {\rm{(}}in the product topology{\rm{)}} invariant
under $a_n\to a_{n+1}$.
\end{lemma}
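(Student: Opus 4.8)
The plan is to exhibit the set in Lemma~\ref{L6.3} as a countable intersection (over $\veps$ and $r$ ranging in countable dense families) of sets each of which is either manifestly closed (hence Borel) or an explicit sublevel set of a continuous function on the product space $\prod_n\{\abs{a}\le A\}$, and then to observe that each of these building blocks is individually translation invariant up to a harmless shift, so that the whole intersection is invariant. First I would dispose of the outer constraint $\sup_n\abs{a_n}\le A$: this is a closed (indeed compact) subset of the product space, and it is trivially invariant under $a_n\mapsto a_{n+1}$, so we may work inside $K_A:=\prod_{n\ge 0}\{\abs{a}\le A\}$ throughout.

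Next I would replace the two suprema and the one ``for all $\veps>0$'' by countable operations. For fixed rational $\veps>0$ and fixed rational $r\in(0,1)$, the map
\[
\{a_n\}\longmapsto \int_{\alpha+\veps}^{\beta-\veps}\Bigl|\sum_{n=0}^\infty a_n r^n e^{in\theta}\Bigr|\,\f{d\theta}{2\pi}
\]
is continuous on $K_A$: the partial sums converge uniformly in $\theta$ because $\abs{a_n r^n e^{in\theta}}\le A r^n$ and $\sum A r^n<\infty$, so the integral is a uniform limit of the obviously continuous finite-sum integrals, and a uniform limit of continuous functions is continuous. Hence $\{a\in K_A\mid \int_{\alpha+\veps}^{\beta-\veps}|\sum a_n r^n e^{in\theta}|\,d\theta/2\pi\le M\}$ is closed for each rational $M$. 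Taking the union over rational $M$ and then the intersection over rational $r\in(0,1)$ gives the set where $\sup_{0<r<1}(\cdots)<\infty$ for that $\veps$ --- here one uses that $r\mapsto \int_{\alpha+\veps}^{\beta-\veps}|\sum a_n r^n e^{in\theta}|\,d\theta/2\pi$ is itself continuous in $r$ on $(0,1)$ (same uniform-convergence argument, now in the two variables $(r,\theta)$ jointly on compacts), so the sup over $(0,1)$ equals the sup over rationals. Finally intersect over rational $\veps>0$; since the condition for smaller $\veps$ is stronger, the rational $\veps$'s suffice to capture ``for all $\veps>0$''. This realizes the set as a countable combination ($\bigcap_\veps \bigcap_r \bigcup_M$) of closed sets, hence as a Borel set.

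For translation invariance I would argue blockwise. Fix $\veps$ and $r$; writing $a'_n=a_{n+1}$ we have $\sum_{n\ge 0}a'_n r^n e^{in\theta}=r^{-1}e^{-i\theta}\bigl(\sum_{n\ge 0}a_n r^n e^{in\theta}-a_0\bigr)$, so
\[
\int_{\alpha+\veps}^{\beta-\veps}\Bigl|\sum_{n\ge0}a'_n r^n e^{in\theta}\Bigr|\f{d\theta}{2\pi}
= r^{-1}\int_{\alpha+\veps}^{\beta-\veps}\Bigl|\sum_{n\ge0}a_n r^n e^{in\theta}-a_0\Bigr|\f{d\theta}{2\pi},
\]
which is bounded above by $r^{-1}$ times the original integral plus $r^{-1}\abs{a_0}\le r^{-1}A$ (a constant). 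Thus finiteness of $\sup_{0<r<1}$ of the left side follows from finiteness of $\sup_{0<r<1}$ of the right-hand integral as $r\uparrow1$; the factor $r^{-1}$ is bounded on $(r_0,1)$ for any $r_0$, and the behavior as $r\downarrow 0$ is irrelevant to finiteness of a sup over a set bounded away from $0$ there by the crude bound $\sum A r^n$. Running the same estimate with $a$ and $a'$ interchanged (now $a_n=a'_{n-1}$ with an $a'_{-1}$ term to insert, harmlessly bounded by $A$) shows the condition for $a$ and for $a'$ are equivalent, i.e.\ the set is invariant under $a_n\mapsto a_{n+1}$. The main obstacle is the mild bookkeeping in passing between ``$\sup_{0<r<1}<\infty$'' for the shifted and unshifted sequences given that the shift introduces the factor $r^{-1}$ which blows up at $r=0$; this is handled once one notes the sup is really a statement about the boundary behavior $r\uparrow 1$ and that $r^{-1}$ is bounded there.
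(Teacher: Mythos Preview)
Your approach is sound and more elementary than the paper's, but the quantifier order in your decomposition is wrong in a way that vitiates the argument as written. You express the set as $\bigcap_\veps \bigcap_r \bigcup_M$ of the closed sets $\{g_{\veps,r}\le M\}$ (where $g_{\veps,r}(a)=\int_{\alpha+\veps}^{\beta-\veps}|\sum a_n r^n e^{in\theta}|\,\f{d\theta}{2\pi}$), but $\bigcap_r \bigcup_M \{g_{\veps,r}\le M\}$ is all of $K_A$: for each fixed $r<1$ the integral is always finite by the crude bound $A/(1-r)$. The statement ``$\sup_{0<r<1} g_{\veps,r}<\infty$'' is ``$\exists M\,\forall r$'', so the correct formula is $\bigcap_\veps \bigcup_{M\in\bbN} \bigcap_{r\in\bbQ\cap(0,1)} \{g_{\veps,r}\le M\}$. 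With this fix everything you wrote goes through: each $\{g_{\veps,r}\le M\}$ is closed by your continuity argument, the inner intersection is closed, the union over $M$ is $F_\sigma$, and the outer intersection is Borel. Your continuity-in-$r$ observation is exactly what justifies restricting to rational $r$.

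Compared with the paper: the paper does not argue continuity in $r$ directly but instead invokes Duren's Theorem~10.1 on $E^1$ of a sector to replace the uncountable $\sup_{0<r<1}$ by a countable family of approximating curves, and it inserts the vanishing factors $(z-z_1^{(n)})(z-z_2^{(n)})$ at the arc endpoints as in the Riesz lemma. Your route avoids this machinery entirely, since uniform convergence of $\sum a_n z^n$ on compacts of $\bbD$ already makes $(r,a)\mapsto g_{\veps,r}(a)$ jointly continuous on $(0,1)\times K_A$; this is a genuine simplification. The invariance argument is essentially the same in both treatments: from $\ti f(z)=z^{-1}(f(z)-a_0)$ one gets $\int|\ti f|\le r^{-1}\int|f|+r^{-1}A\cdot\f{\beta-\alpha}{2\pi}$, and the $r^{-1}$ is harmless because for $r\le\f12$ one has the uniform bound $|\ti f(re^{i\theta})|\le A/(1-r)\le 2A$. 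For the reverse implication you do not need any ``$a'_{-1}$'' device; simply $f(z)=a_0+z\ti f(z)$ gives $\int|f|\le A+\int|\ti f|$.
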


\begin{lemma}\lb{L6.4} Let $\mu$ be a probability measure on $\{\{a_n\}_{n=-\infty}^\infty \mid \sup_n \abs{a_n}\leq A\}$
that defines an ergodic invariant stochastic process. Let $\calR_\mu$ be the set of two-sided series $\{b_n\}_{n=-\infty}^\infty$
so that $\{b_n\}_{n=-\infty}^\infty$ is a right limit of $\{a_n\}_{n=0}^\infty$ with positive probability. Then $\calR_\mu$ is
the support of the measure $\mu$.
\end{lemma}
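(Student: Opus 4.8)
The plan is to prove the two inclusions $\supp\mu \subseteq \calR_\mu$ and $\calR_\mu \subseteq \supp\mu$ separately, using ergodicity crucially for the first and only measurability/continuity for the second. Throughout, let $\calA = \{\{a_n\}_{n=-\infty}^\infty \mid \sup_n\abs{a_n}\le A\}$ with the product topology, on which $\mu$ lives, and recall that the shift $T\colon a_n\mapsto a_{n+1}$ is the relevant transformation. The set of right limits of the one-sided sequence $\{a_n\}_{n=0}^\infty$ obtained along $n_j\to\infty$ is, by definition, the set of limit points of the forward orbit $\{T^{n}a\}_{n\ge 0}$ in $\calA$, where $a$ is the two-sided sequence. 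So for a fixed realization $a$, the set of right limits is the forward $\omega$-limit set $\omega^+(a)$.

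For the inclusion $\calR_\mu \subseteq \supp\mu$: suppose $b\in\calA$ is a right limit of $\{a_n\}_{n=0}^\infty$ with positive probability, i.e., $\bbP(\{a : b\in\omega^+(a)\}) > 0$. Fix any basic open neighborhood $U$ of $b$ in the product topology (a cylinder set constraining finitely many coordinates). If $b\in\omega^+(a)$, then $T^n a\in U$ for infinitely many $n\ge 0$; in particular $a$ lies in $\bigcup_{n\ge 0} T^{-n}U$. Hence $\mu\bigl(\bigcup_{n\ge 0}T^{-n}U\bigr)>0$, so $\mu(T^{-n}U)>0$ for some $n$, and by invariance $\mu(U) = \mu(T^{-n}U) > 0$. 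Since this holds for every basic neighborhood $U$ of $b$, we get $b\in\supp\mu$.

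For the inclusion $\supp\mu\subseteq\calR_\mu$: this is where ergodicity enters. Fix $b\in\supp\mu$ and a basic open neighborhood $U$ of $b$; then $\mu(U)>0$. Consider the event $E_U = \{a : T^n a\in U \text{ for infinitely many } n\ge 0\}$. The set $\bigcap_{N\ge 0}\bigcup_{n\ge N}T^{-n}U$ is shift-invariant, so by ergodicity it has measure $0$ or $1$; since $\mu(U)>0$ and $\mu$ is invariant, a Poincaré-recurrence-type argument (or the ergodic theorem applied to $\bdone_U$, whose time averages converge a.e.\ to $\mu(U)>0$) forces the invariant set to have measure $1$, so $\mu(E_U)=1$. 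Now intersect over a countable neighborhood basis $\{U_k\}$ of $b$ (which exists since the product topology on $\calA$, a countable product of compact metrizable spaces, is metrizable): for $\mu$-a.e.\ $a$, $T^n a$ visits every $U_k$ infinitely often, hence $b\in\omega^+(a)$. So $b$ is a right limit of $\{a_n\}_{n=0}^\infty$ with probability $1 > 0$, i.e., $b\in\calR_\mu$.

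The main obstacle is getting the ergodicity argument clean in the "$\supp\mu\subseteq\calR_\mu$" direction: one must either invoke the pointwise ergodic theorem for the indicator of each basic set $U_k$ (concluding the Cesàro average of $\bdone_U(T^n a)$ tends to $\mu(U)>0$, so infinitely many visits occur a.s.), or argue directly that $\{a : T^n a\in U \text{ i.o.}\}$ is invariant of positive measure, hence full. Either way the countability of the neighborhood basis of $b$ — guaranteed by metrizability of $\calA$ — is what lets us pass from "visits each $U_k$ i.o., a.s." to "$b\in\omega^+(a)$, a.s." by intersecting countably many full-measure events. Everything else (invariance of $\mu$, measurability of the relevant sets, the identification of right limits with $\omega$-limit points) is routine.
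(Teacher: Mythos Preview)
Your proof is correct and follows essentially the same route as the paper's: shift-invariance alone gives $\calR_\mu\subseteq\supp\mu$, and the pointwise ergodic theorem applied to indicators of cylinder neighborhoods gives $\supp\mu\subseteq\calR_\mu$. The only cosmetic difference is that the paper works with explicit $\veps$--$N$ cylinder sets and finishes with a diagonalization trick, whereas you invoke metrizability to get a countable neighborhood base and intersect the resulting full-measure events; these are the same argument in different clothing.
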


\begin{proof}[Proof of Theorem~\ref{T1.7}] By Lemma~\ref{L6.3}, Theorem~\ref{T1.4}, and ergodicity (letting $\alpha,\beta$
run through rational multiples of $2\pi$), either $\sum_{n=0}^\infty a_n z^n$ has a strong natural boundary with probability
$1$ or there is an interval, $I$, in $\partial\bbD$ so that with probability $1$, all right limits of $\sum_{n=0}^\infty
a_n z^n$ are reflectionless across $I$. In that case, by Lemma~\ref{L6.4}, all $\{b_n\}_{n=-\infty}^\infty$ in the support
of $\mu$ are reflectionless across $I$. By Theorem~\ref{T2.2}, on $\supp(\mu)$, $b_0$ is a continuous, and so measurable, function
of $\{b_n\}_{n=-\infty}^{-1}$. Thus, the process is deterministic (in the strong sense of there being a continuous, rather than
merely a measurable function).
\end{proof}

\begin{proof}[Proof of Lemma~\ref{L6.3}] By Theorem~10.1 of Duren \cite{Duren}, the space $E$ (sector) can be defined by a
countable family of approximating curves. Thus, letting $z_1^{(n)}=\exp(i(\alpha+\f{1}{n}))$, $z_2^{(n)}=\exp(i(\beta-\f{1}{n}))$,
the $\sup$ condition can be replaced by
\begin{equation} \lb{6.12}
\sup_{m,n}\left( \int_{\alpha+\f{1}{n}}^{\beta-\f{1}{n}} \left. \biggl| (z-z_1^{(n)})(z-z_2^{(n)})
\sum_{n=0}^\infty a_n z^n \biggr| \right|_{z=(1-\f{1}{m})e^{i\theta}} \f{d\theta}{2\pi} \right) <\infty
\end{equation}
Since it is described by a countable $\sup$ of uniformly convergent sums, this set is clearly measurable.

If $f(z)=\sum_{n=0}^\infty a_n z^n$ and $\ti f(z)=\sum_{n=0}^\infty a_{n+1} z^n$, then
\begin{equation} \lb{6.13}
\ti f(z)=z^{-1} (f(z)-a_0 z)
\end{equation}
and finiteness of the $\sup$ in \eqref{6.12} implies finiteness of the $\sup$ for $\ti f$ replacing $f$. This proves the
claimed invariance.
\end{proof}

\begin{proof}[Proof of Lemma~\ref{L6.4}] If $b_n$ is in $\supp(d\mu)$, then for any $\veps$, $N,\mu(\{a_n\mid
\abs{a_{N+n}-b_n}<\veps,\, n=0,\pm 1, \dots, \pm N\})>0$. By the ergodic theorem for a.e.\ $\omega$,
\begin{equation} \lb{6.14}
\lim_{M\to\infty} \, \f{1}{M}\, \#\{j<M\mid \abs{a_{n+j+N}-b_j} <\veps, \, n=0,\pm 1, \dots, \pm N\} >0
\end{equation}
so for a.e.\ $\omega$, there exists $N_\ell \to\infty$ with $\abs{a_{n+N_\ell}-b_n}<\veps$ for all $n$ with $\abs{n}<N$.
By a diagonalization trick, $b_n$ is a right limit for a.e.\ $\omega$.

If $b_n$ is not in the support of $\mu$, pick $N$ and $\veps$ so that $\mu(\{a_n\mid \abs{a_{N+n}-b_n}\leq \veps, \,
\abs{n}\leq N\})=0$. By translation invariance for all $k$,
\begin{equation} \lb{6.15}
\mu(\{a_n\mid \abs{a_{N+k+n}-b_n} \leq \veps, \, n\leq N\})=0
\end{equation}
which implies $b_n$ is not a right limit with probability $1$.
\end{proof}

%%%%%%%%%%%%%%%%%%%%%%%%%%%%%%%%%%%%
\section{Hecke's Example} \lb{s7}
%%%%%%%%%%%%%%%%%%%%%%%%%%%%%%%%%%%%

Motivated by a spectral theory result of Damanik--Killip \cite{DK}, we prove the following that includes Theorem~\ref{T1.8}
and so, Hecke's example.

\begin{theorem} \lb{T7.1} Let $T\colon\partial\bbD\to\partial\bbD$ be a homeomorphism so that for any $e^{i\theta}\in
\partial\bbD$, $\{T^k (e^{i\theta})\mid k=0,1,\dots\}$ is dense in $\partial\bbD$. Let $f\colon\partial\bbD\to\bbC$
be a bounded and piecewise continuous function with a finite number of discontinuities so that at, at least, one discontinuity,
the right and left limits exist and are not equal. Then for any $e^{i\theta}\in\partial\bbD$, $\sum_{n=0}^\infty
f(T^n (e^{i\theta}))z^n$ has a strong natural boundary on $\partial\bbD$.
\end{theorem}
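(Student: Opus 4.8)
The plan is to apply the reflectionless machinery of Theorem~\ref{T3.1} (or equivalently Theorem~\ref{T2.3} together with Theorem~\ref{T1.4}) by exhibiting, for every arc $I\subset\partial\bbD$, two right limits of $\{a_n\}$ with $a_n = f(T^n(e^{i\theta}))$ that agree on the negative half-line but differ at index $0$. Fix once and for all a discontinuity $e^{i\varphi_0}$ of $f$ at which the one-sided limits $L_\pm$ exist and $L_+\neq L_-$. Let $D=\{e^{i\varphi_1},\dots,e^{i\varphi_p}\}$ be the (finite) set of all discontinuities of $f$. The idea is that because the forward orbit of $e^{i\varphi_0}$ (under $T$, or rather under $T^{-1}$ applied to an approximating point) is dense and $T$ is a homeomorphism, we can find times $n_j\to\infty$ along which $T^{n_j}(e^{i\theta})$ approaches $e^{i\varphi_0}$ from one side, and a second sequence $m_j\to\infty$ along which $T^{m_j}(e^{i\theta})$ approaches $e^{i\varphi_0}$ from the other side, while for each fixed $k<0$ the points $T^{n_j+k}(e^{i\theta})$ and $T^{m_j+k}(e^{i\theta})$ both converge to a common limit point that avoids the bad set $D$, so that by continuity of $f$ away from $D$ the two right limits coincide at every negative index.

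Here is the order of steps I would carry out. First, set up the orbit dynamics: since $T$ is a homeomorphism of the compact space $\partial\bbD$ with every forward orbit dense, the same holds — by a standard argument — for a dense set of points under $T^{-1}$, and in any case the orbit $\{T^n(e^{i\theta})\}$ accumulates at $e^{i\varphi_0}$ from both sides (a one-sided-only accumulation would force the orbit into one of the two arcs bounded near $e^{i\varphi_0}$, contradicting density). Second, pick a short arc $U\ni e^{i\varphi_0}$ small enough that $U\setminus\{e^{i\varphi_0}\}$ contains no other discontinuity, and choose $n_j\to\infty$ with $T^{n_j}(e^{i\theta})\to e^{i\varphi_0}$ from the $+$ side. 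Third — the key compactness step — pass to a subsequence so that for every fixed $k\in\bbZ$ the point $T^{n_j+k}(e^{i\theta})$ converges to some $w_k\in\partial\bbD$; by the diagonal trick this can be done simultaneously for all $k$. Note $w_0 = e^{i\varphi_0}$. Fourth, choose the second sequence: I claim one can find $m_j\to\infty$ with $T^{m_j}(e^{i\theta})\to e^{i\varphi_0}$ from the $-$ side and with $T^{m_j+k}(e^{i\theta})\to w_k$ for every $k<0$ as well. The cleanest way is to realize the $w_k$ as coming from an orbit in the closure: the $T$-orbit closure of $e^{i\theta}$ supports a minimal dynamics, and because every forward orbit is dense, the left-sided accumulation at $e^{i\varphi_0}$ can be "steered" along the same negative-index pattern $(w_{-1},w_{-2},\dots)$ — one produces $m_j$ by first matching the finite window $T^{m_j+k}\approx w_k$ for $-N\le k\le -1$ using density/recurrence, then selecting among these the ones that land on the $-$ side of $e^{i\varphi_0}$ at index $0$ (there are infinitely many, since both one-sided accumulations are "dense along the orbit"). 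Fifth, conclude: the right limit from $\{n_j\}$ has $b_k = \lim_j f(T^{n_j+k}(e^{i\theta}))$, which for $k<0$ equals $f(w_k)$ provided $w_k\notin D$ — and if some $w_k\in D$ one simply re-chooses $U$ or perturbs, or more robustly uses that we only need the two sequences to produce the \emph{same} value at each negative index, which holds whenever both approach $w_k$ from the same side (arrange this in the subsequence selection). Meanwhile $b_0 = \lim_j f(T^{n_j}(e^{i\theta})) = L_+$ and the other right limit $c$ has $c_0 = L_-\neq L_+$, while $c_k=b_k$ for all $k<0$. By Theorem~\ref{T2.3}/Theorem~\ref{T3.1} this forces a strong natural boundary; and since this construction works for the translated sequence too, no arc $I$ can be an arc of reflectionlessness, which is exactly what Corollary~\ref{C1.5} needs.

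\textbf{Main obstacle.} The delicate point is Step~four: producing the \emph{second} sequence $m_j$ that simultaneously (a) approaches $e^{i\varphi_0}$ from the opposite side at index $0$ and (b) replicates the entire negative-index limit pattern $(w_{-1},w_{-2},\dots)$ — with matching one-sided behavior at each $w_k$ so that the $f$-values agree. The orbit's forward density gives recurrence to any finite window, but controlling \emph{which side} the orbit hits a point from, at index $0$ while simultaneously pinning down finitely many earlier coordinates, requires a genuine argument about the minimal system $(\overline{\{T^n e^{i\theta}\}},T)$ — essentially that the point $e^{i\varphi_0}$ is approached from both sides by \emph{subsystems} carrying the same one-sided tail data at negative indices. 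I expect this is where one borrows the Damanik–Killip \cite{DK} analysis: their treatment of the analogous spectral problem handles exactly this "two-sided approach along a prescribed orbit segment" via the structure of the relevant subshift/rotation, and the same bookkeeping transfers here. An alternative, perhaps cleaner route is to avoid a second sequence entirely: take the single right limit $\{w_k\}_{k\in\bbZ}$ and observe that it is the coefficient sequence of $n\mapsto f_1(T^n x)$ for a point $x$ in the orbit closure and a function $f_1$ still discontinuous at $w_0$; then \emph{its} two right limits at $w_0$ (approaching from the two sides, which exist by density in the orbit closure) play the role of $b,c$, and one iterates the reflectionless-is-closed statement of Theorem~\ref{T2.2} to push the obstruction back to the original $f$. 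Either way, once the pair $(b,c)$ with $b_0\neq c_0$, $b_k=c_k\ (k<0)$ is in hand, the rest is immediate from the earlier theorems.
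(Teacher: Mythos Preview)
Your overall strategy --- produce two right limits that agree on a half-line but differ at index $0$, then invoke Theorem~\ref{T3.1} --- is exactly what the paper does. But you have badly overcomplicated the central step, and what you flag as the ``main obstacle'' is in fact trivial once you use one observation you never make: $T$ is a \emph{homeomorphism}, so for every fixed $k\in\bbZ$ the map $T^k$ is continuous. Hence if $T^{n_j}(e^{i\theta})\to e^{i\varphi_0}$ then automatically
\[
T^{n_j+k}(e^{i\theta})=T^k\bigl(T^{n_j}(e^{i\theta})\bigr)\longrightarrow T^k(e^{i\varphi_0})
\]
for every $k$, with no subsequence extraction needed. The same holds for any second sequence $m_j$ with $T^{m_j}(e^{i\theta})\to e^{i\varphi_0}$. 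So the limit points $w_k=T^k(e^{i\varphi_0})$ are forced to coincide for the two sequences, and your Step~4 --- ``steering'' the second sequence to replicate the negative-index pattern --- is vacuous. There is no need for minimality of subshifts, recurrence arguments, or the Damanik--Killip bookkeeping you anticipate.

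The paper then handles your remaining worry (what if some $w_k$ lies in the discontinuity set $D$?) by working with \emph{positive} indices $k$ rather than negative ones: the forward orbit $\{T^\ell(e^{i\varphi_0})\}_{\ell\ge1}$ is dense, hence consists of distinct points, hence meets the finite set $D$ in at most $\#D$ places; so for all $\ell>L$ the point $T^\ell(e^{i\varphi_0})$ is a continuity point of $f$, and $a_{n_j+\ell}-a_{m_j+\ell}\to f(T^\ell(e^{i\varphi_0}))-f(T^\ell(e^{i\varphi_0}))=0$. This matches the ``$k>0$, all $j\ge k$'' clause of Theorem~\ref{T2.3} directly. Your negative-index version would also work (since $T^{-1}$ is continuous and the backward orbit is likewise dense for a minimal homeomorphism), but the forward direction avoids even that small extra remark.
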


\begin{remark} For other papers on other extensions of Hecke's example, see \cite{Salem,Newman,Mordell,Schwarz,Meijer,
Popken,CarKem}.
\end{remark}

\begin{proof} By rotating, we suppose $e^{i\theta}=1$ is a point of a discontinuity with $\lim_{\theta\downarrow 0}
f(e^{i\theta})=r\neq s=\lim_{\theta\uparrow 0} f(e^{i\theta})$. By the density of any orbit for any $e^{i\theta}$,
we can find $n_j\to\infty$, so $T^{n_j}(e^{i\theta})\to 1$ with $T^{n_j}(e^{i\theta})=e^{i\psi_j}$ with $-\pi < \psi_j <0$
and $m_j\to\infty$, so $T^{m_j}(e^{i\theta})\to 1$ and $T^{m_j}(e^{i\theta})=e^{i\eta_j}$ with $\pi >\eta_j>0$ (for find
$n_{j+1}>n_j$ so $\abs{T^{n_{j+1}}(e^{i\theta})-e^{-i/(j+1)}}\leq (j+1)^{-2}$).

Thus, $a_{n_j}\to s$ and $a_{m_j}\to r$. On the other hand, look at the orbit $\{T^\ell(1)\mid\ell=1,2,\dots\}$.
Since this orbit is dense, the $T^\ell(1)$ must be distinct, so for some $L$ and all $\ell >L$, $T^\ell(1)$ must be
a point of continuity. Thus, for $\ell >L$, $a_{n_j+\ell}-a_{m_j+\ell}\to 0$. The hypotheses of Theorem~\ref{T2.3}
hold for the right limits defined by $a_{n_j+n}$ and $a_{m_j+n}$. So, by Theorem~\ref{T3.1}, we have a strong natural
boundary.
\end{proof}

%%%%%%%%%%%%%%%%%%%%%%%%%%%%%%%%%%%%
\section{Baire Genericity} \lb{s8}
%%%%%%%%%%%%%%%%%%%%%%%%%%%%%%%%%%%%

\begin{proof}[Proof of Theorem~\ref{T1.9}] Let $\{a_n\}_{n=0}^\infty\in\Omega^\infty$. Pick two distinct points,
$z_0,z_1\in\Omega$. For any $\ell$, define
\[
a_n^{(\ell)} = \begin{cases}
a_n & n\leq \ell \\
z_0 & n=\ell+k! \text{ for } k=1,2,\dots \\
z_1 & n=\ell +m, \,\,  m\geq 1,\,\, m\neq k! \text{ for any } k
\end{cases}
\]
Then, by the gap theorem, $\{a_n^{(\ell)}\}$ has a natural boundary on $\partial\bbD$, indeed, by Weierstrass' original
direct arguments. But $\lim_{\ell\to\infty} a_n^{(\ell)}=a_n$ for each fixed $n$, so the set in the theorem is dense in
the weak topology.

For every rational multiple $\alpha,\beta$ of $2\pi$ with $\alpha <\beta$, and every $K=1,2,\dots$, and any $n=1,\dots$,
let $F_{\alpha,\beta,K,n} = \{\{a_n\}_{n=1}^\infty\in\Omega^\infty \mid f(z)=\sum_{n=0}^\infty a_n z^n$ has an analytic
continuation to $\{z\mid \abs{z}<1+n^{-1},\, \alpha <\arg(z)<\beta\}$ with $\abs{f(z)}\leq K$ there$\}$. It is easy to see that
\begin{SL}
\item[(i)] Each $F_{\alpha,\beta,K,n}$ is closed in the topology of pointwise convergence of the $a_n$'s (by the Vitali
theorem).
\item[(ii)] $\cup F_{\alpha,\beta,K,n}$ is the set of power series for which $\partial\bbD$ is not a natural boundary.
\end{SL}

Thus, the complement of the set in the theorem is an $F_\delta$, so the set is a $G_\delta$.
\end{proof}

\bigskip
%%%%%%%%%%%%%%%%%%%%%%%%%%%%%%%

\end{document}